\documentclass[12pt]{amsart}

\usepackage{amsmath,amssymb,amsfonts}
\usepackage{mathrsfs}
\usepackage{mathtools}
\usepackage{enumitem}
\usepackage{xcolor}
\usepackage{hyperref}
\usepackage[alphabetic,msc-links]{amsrefs}
\usepackage{tikz-cd}

\definecolor{referenceblue}{RGB}{0,70,140}
\hypersetup{
  colorlinks=true,
  linkcolor=referenceblue,
  citecolor=referenceblue,
  urlcolor=referenceblue,
  pdftitle={Simplicial Volume and Scalar Curvature on Closed Kahler Surfaces},
  pdfauthor={Jie Min, Fangyang Zheng, and Bo Zhu},
  pdfsubject={Simplicial volume, scalar curvature, and Kahler geometry},
  pdfkeywords={simplicial volume, scalar curvature, Yamabe invariant, Kahler surface, canonical bundle}
}

\usepackage{geometry}
\newtheorem{theorem}{Theorem}[section]
\newtheorem{conjecture}[theorem]{Conjecture}
\newtheorem{proposition}[theorem]{Proposition}
\newtheorem{lemma}[theorem]{Lemma}
\newtheorem{corollary}[theorem]{Corollary}
\newtheorem{problem}[theorem]{Problem}
\theoremstyle{definition}
\newtheorem{definition}[theorem]{Definition}

\newtheorem{remark}[theorem]{Remark}

\numberwithin{equation}{section}
\newcommand{\Sc}{\operatorname{Sc}}
\newcommand{\Ric}{\operatorname{Ric}}
\newcommand{\vol}{\operatorname{vol}}
\newcommand{\Acal}{\mathcal A}
\newcommand{\Rcal}{\mathcal R}
\newcommand{\CP}{\mathbb{CP}}
\newcommand{\HH}{\mathbb H}
\newcommand{\eps}{\varepsilon}

\begin{document}

\title[Simplicial volume and scalar curvature]
{Simplicial Volume and Scalar Curvature on Closed K\"ahler Surfaces}
\date{\today}

\author{Jie Min}
\address[Jie Min]{Hetao Institute for Mathematics and Interdisciplinary
Sciences, Shenzhen, China}
\email{\href{mailto:jiemin.geometry@gmail.com}{jiemin.geometry@gmail.com}}

\author{Fangyang Zheng}
\address[Fangyang Zheng]{School of Mathematical Sciences, Chongqing Normal
University, Chongqing 401331, China}
\email{\href{mailto:20190045@cqnu.edu.cn}{20190045@cqnu.edu.cn}}

\author{Bo Zhu}
\address[Bo Zhu]{Yau Mathematical Sciences Center, Tsinghua University}
\email{\href{mailto:zhub@tsinghua.edu.cn}{zhub@tsinghua.edu.cn}}
\thanks{Fangyang Zheng is supported by NSFC~12471039 and 12141101, and
Bo Zhu is supported by NSFC~12501066.}

\subjclass[2020]{Primary 53C21, 53C23; Secondary 32Q15, 57N65}
\keywords{simplicial volume, scalar curvature, Yamabe invariant,
K\"ahler surface, canonical bundle, K\"ahler--Einstein metric}

\begin{abstract}
Let \(M\) be a closed K\"ahler surface.  We prove that every Riemannian
metric \(g\) on \(M\) with \(\Sc_g\geq-\lambda^2\), where
\(\lambda\geq0\), satisfies
\[
  \lVert M\rVert\leq \frac{27}{2}\,\lambda^4\vol_g(M).
\]
This proves Gromov's quantitative
scalar-curvature--simplicial-volume conjecture for closed K\"ahler surfaces.
We also construct infinitely many non-K\"ahler symplectic
\(4\)-manifolds of general type with positive simplicial volume for which
the same estimate holds.
\end{abstract}

\maketitle
%\tableofcontents

\section{Introduction}
\label{sec:introduction}

% Scalar curvature is a local invariant of a Riemannian metric, whereas simplicial volume is a global homotopy invariant of the underlying manifold. 
% Gromov conjectured a quantitative relation between them, that is, a scalar curvature lower bound, together with the Riemannian volume, should control simplicial volume.

Let \(M\) be a closed oriented \(n\)-manifold.  For a real singular
\(n\)-chain \(c=\sum_i a_i\sigma_i\), set
\[
  \lVert c\rVert_1:=\sum_i |a_i|.
\]
The \emph{simplicial volume} of \(M\) is the \(\ell^1\)-seminorm of its real
fundamental class, namely,
\[
  \lVert M\rVert
  :=\inf\bigl\{\lVert c\rVert_1:
  c\in C_n(M;\mathbb R),\ \partial c=0,\ [c]=[M]_{\mathbb R}\bigr\},
\]
where \([M]_{\mathbb R}\in H_n(M;\mathbb R)\) denotes the real fundamental
class.  Simplicial volume is invariant under homotopy equivalences and
multiplicative under finite coverings.  In dimension four, it is unchanged
by connected sum with a closed simply connected manifold
(see \cite{gromov_bounded_cohomology_1982}*{Section~0.2, pp.~216, 218}).
Gromov conjectured that a lower scalar-curvature bound and the Riemannian
volume control this global topological invariant
(see \cite{Gromov_101}*{Section~28, p.~88}).
\begin{conjecture}
\label{conj:gromov}
For every \(n\geq2\), there exists a constant \(c_n\geq0\) such that every
closed oriented Riemannian \(n\)-manifold \((M,g)\) with
\(\Sc_g\geq-\lambda^2\), where \(\lambda\geq0\), satisfies
\begin{equation}
\label{eq:gromov-conjecture}
  \lVert M\rVert\leq c_n\lambda^n\vol_g(M).
\end{equation}
\end{conjecture}

When \(\lambda=0\), Conjecture~\ref{conj:gromov} says that a closed manifold
with nonnegative scalar curvature has zero simplicial volume.  This is known
for closed spin manifolds whose fundamental groups satisfy the strong Novikov
conjecture for the maximal group \(C^*\)-algebra
(see \cite{Lott_noncompact_PSC_2025}*{Corollary~A.5}).  Qiaochu Ma
and Guoliang Yu proved the positive-scalar-curvature case when the universal
cover is spin and \(\pi_1(M)\) has subexponential decay
(see \cite{MaYu_decay_2026}*{Theorem~1.3}).  Theorem~\ref{thm:main}
proves~\eqref{eq:gromov-conjecture} for closed K\"ahler surfaces.

\begin{theorem}
\label{thm:main}
Let \(M\) be a closed K\"ahler surface.  Then every Riemannian metric \(g\)
on \(M\) satisfies
\begin{equation}
\label{eq:main-integrated}
  \lVert M\rVert
  \leq \frac{27}{2}
  \int_M (\Sc_g^-)^2\,d\vol_g,
  \qquad
  \Sc_g^-:=\max\{-\Sc_g,0\}.
\end{equation}
In particular, if \(\Sc_g\geq-\lambda^2\) for some \(\lambda\geq0\), then
\begin{equation}
\label{eq:main-pointwise}
  \lVert M\rVert
  \leq \frac{27}{2}\,\lambda^4\vol_g(M).
\end{equation}
\end{theorem}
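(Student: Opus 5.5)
The plan is to reduce to minimal models via the Enriques--Kodaira classification and use two ingredients. The first is an upper bound on \(\lVert M\rVert\) coming from Gromov's Ricci-curvature volume estimate applied to (approximate) K\"ahler--Einstein metrics. The second is LeBrun's Seiberg--Witten lower bound for \(\int \Sc^2\). The first reduction uses the invariance of simplicial volume under connected sum with simply connected manifolds, stated in the introduction. If \(M\) is a blow-up of \(X\), then \(M\cong X\#k\overline{\CP}^2\), so \(\lVert M\rVert=\lVert X\rVert\). We may therefore take \(X\) to be a minimal model whenever one exists. For rational surfaces \(\lVert M\rVert=0\) trivially, since they are simply connected.

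\textbf{Vanishing cases.} If the Kodaira dimension of \(X\) is at most \(1\), I will show \(\lVert X\rVert=0\), so that \eqref{eq:main-integrated} is trivial. The cases are as follows.
\begin{itemize}
\item Ruled surfaces are \(S^2\)-bundles over curves, and these carry an \(S^1\)-action without fixed points.
\item Tori, hyperelliptic surfaces, Kodaira surfaces excluded (non-K\"ahler) and properly elliptic surfaces carry fibrations with torus fibres (possibly singular, with multiple fibres). These admit amenable open covers of multiplicity at most \(4\), for instance via \(F\)-structures or Gromov's vanishing theorem for amenable covers.
\item K3 and Enriques surfaces are finitely covered by a simply connected surface.
\end{itemize}
This step invokes standard results. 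The only care needed is for singular and multiple fibres, and I will cite Gromov's vanishing for manifolds with amenable covers of low multiplicity, or the known vanishing for elliptic surfaces.

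\textbf{General type.} Let \(X\) be minimal of general type. LeBrun's estimate gives, for every Riemannian metric \(g\) on \(M=X\#k\overline{\CP}^2\),
\[
\int_M \Sc_g^2\,d\vol_g \ge \int_M (\Sc_g^-)^2\,d\vol_g \ge 32\pi^2 c_1^2(X).
\]
Here the first inequality is only needed after passing to \(\Sc^-\). LeBrun's argument actually bounds \(\int (\Sc^-)^2\), because it uses the Seiberg--Witten inequality \(\int \Sc\,|\phi|^2 \le -|\phi|^4\) and only negative scalar curvature contributes. It therefore suffices to prove \(\lVert X\rVert\le 432\pi^2 c_1^2(X)\). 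For this I use Gromov's inequality
\[
\lVert N^n\rVert\le (n-1)^n n!\,\vol(N,h)\quad\text{whenever } \Ric_h\ge -(n-1)h .
\]
If \(X\) carried a smooth K\"ahler--Einstein metric \(\omega\) with \(\Ric=-\omega\), its volume would be \(\omega^2/2=2\pi^2c_1^2\). Rescaling by \(1/3\) gives \(\Ric\ge -3h\) with \(\vol=2\pi^2c_1^2/9\). Gromov's inequality then yields \(\lVert X\rVert\le 81\cdot 24\cdot 2\pi^2c_1^2/9=432\pi^2c_1^2\). Combined with LeBrun's bound, this gives exactly the constant \(27/2\).

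\textbf{Main obstacle.} In general \(K_X\) is only nef and big, because of \((-2)\)-curves. The K\"ahler--Einstein metric then lives on the canonical model and has orbifold singularities, so Gromov's theorem is not directly applicable. I will handle this by approximation. Fix an ample class \(A\) and, for \(\varepsilon>0\), apply Aubin--Yau to the ample class \(c_1(K_X)+\varepsilon A\). This produces smooth K\"ahler metrics \(\omega_\varepsilon\) with
\[
\Ric(\omega_\varepsilon)=-\omega_\varepsilon+\varepsilon\,\omega_A\ge-\omega_\varepsilon ,
\]
where \(\omega_A>0\) represents \(2\pi A\). Their volumes are \(2\pi^2(K_X+\varepsilon A)^2\to 2\pi^2c_1^2(X)\). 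Applying Gromov's inequality to each \(\omega_\varepsilon\) and letting \(\varepsilon\to0\) gives the bound. The checks are the normalisation of the real Ricci tensor versus the Ricci form and the positivity of the error term. Finally, \eqref{eq:main-pointwise} follows from \eqref{eq:main-integrated}, since \(\Sc_g^-\le\lambda^2\) pointwise.
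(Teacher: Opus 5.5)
Your argument is correct in its main line and is essentially the paper's proof. Blow-up invariance reduces everything to the minimal model $X$. The upper bound $\lVert X\rVert\le 432\pi^2c_1^2(X)$ comes from Aubin--Yau twisted K\"ahler--Einstein metrics in the classes $c_1(K_X)+\varepsilon A$ combined with Gromov's Ricci--volume inequality. This is the paper's Proposition~\ref{prop:nef-canonical} with $m=2$; you normalize $\Ric\ge-\omega$ and rescale by $1/3$ instead of taking $\lambda=3$ directly. The lower bound $32\pi^2c_1^2(X)$ is LeBrun's.

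There are two harmless local differences. First, you run LeBrun's Seiberg--Witten estimate directly with $\Sc^-$, applying Cauchy--Schwarz to $\int|\phi|^4\le\int\Sc^-|\phi|^2$. The paper instead quotes LeBrun's value $\sigma(M)=-4\pi\sqrt{2c_1^2(S)}$ and converts it by the H\"older argument of Lemma~\ref{lem:yamabe-reformulation}. That route also yields the identity $\Acal_4(M)=32\pi^2c_1^2(S)$, which the paper uses in its sharpness discussion. Second, you treat $\kappa\le1$ case by case rather than through Paternain--Petean collapse.

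One justification is false, though it is easily repaired. A ruled surface over a curve of genus $g\ge2$ has Euler characteristic $4-4g\neq0$. The fixed-point set of a smooth circle action always has the same Euler characteristic as the manifold, so such a surface admits no fixed-point-free $S^1$-action. The vanishing $\lVert M\rVert=0$ still holds. Because the fibre $S^2$ is simply connected, the projection to $\Sigma_g$ is a $\pi_1$-isomorphism. Hence the classifying map factors through the aspherical surface $\Sigma_g$, and $H_4(\Sigma_g;\mathbb R)=0$. By Gromov's mapping theorem, simplicial volume depends only on the image of $[M]$ in $H_4(B\pi_1(M);\mathbb R)$, so it vanishes. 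Alternatively, you can cite Yano's theorem, which covers circle actions with fixed points, or use Paternain--Petean as the paper does.
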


The coefficient \(27/2\) is uniform over all closed K\"ahler surfaces,
including blow-ups.  It is not optimal on every fixed manifold:
Proposition~\ref{prop:bidisk} gives the optimal coefficient
\(3/(32\pi^2)\) for closed bidisk quotients and their blow-ups.

In addition, we also establish Conjecture \ref{conj:gromov} for infinitely many non-K\"ahler symplectic $4$-manifolds of general type. These symplectic manifolds have minimal models with similar topology to K\"ahler surfaces, which made our technique viable.
\subsection*{Outline of the proof}

Lemma~\ref{lem:yamabe-reformulation} replaces the pointwise
scalar-curvature hypothesis by the metric-independent scalar cost
\[
  \Acal_4(M)
  =\bigl|\min\{\sigma(M),0\}\bigr|^2
  =\inf_g\int_M(\Sc_g^-)^2\,d\vol_g.
\]
where \(\sigma(M)\) is the Yamabe invariant of \(M\).  It is therefore
enough to prove
\[
  \lVert M\rVert\leq\frac{27}{2}\Acal_4(M).
\]

If \(M\) is not of general type, the collapse theorem of Paternain and
Petean and Gromov's Ricci-curvature estimate imply
\(\lVert M\rVert=0\).  If \(M\) is of general type with minimal model
\(S\), then blow-up invariance of simplicial volume and LeBrun's formula
give
\[
  \lVert M\rVert=\lVert S\rVert,
  \qquad
  \Acal_4(M)=32\pi^2c_1^2(S).
\]
The canonical bundle of \(S\) is nef.  By constructing twisted negative
K\"ahler--Einstein metrics in classes approaching the canonical class and
applying Gromov's Ricci-curvature estimate,
Proposition~\ref{prop:nef-canonical} gives
\[
  \lVert S\rVert\leq432\pi^2c_1^2(S).
\]
Comparing the last two displays gives
\[
  \lVert M\rVert
  \leq\frac{432\pi^2}{32\pi^2}\Acal_4(M)
  =\frac{27}{2}\Acal_4(M).
\]
The variational characterization of \(\Acal_4(M)\) now gives
\eqref{eq:main-integrated} for every Riemannian metric \(g\).  If
\(\Sc_g\geq-\lambda^2\), then \(\Sc_g^-\leq\lambda^2\), and
\eqref{eq:main-pointwise} follows.

For symplectic \(4\)-manifolds, LeBrun's monopole-class estimate replaces
his exact formula for the Yamabe invariant of a complex surface.
Proposition~\ref{prop:symplectic} combines this estimate with the preceding
K\"ahler bound to get bound on simplicial volume, while Proposition~\ref{prop:symplectic-exmaple} constructs an infinite non-K\"ahler family to which the simplicial volume bound holds.

The nef-canonical estimate holds in every complex dimension.  In complex
dimension two, LeBrun's formula identifies canonical volume with scalar
cost.  Proposition~\ref{prop:higher-yamabe-failure} shows that this identity
fails in every complex dimension at least three: there are closed projective
manifolds with ample canonical bundle and vanishing scalar cost.

\section{Yamabe reduction and nef canonical classes}
\label{sec:scalar-nef}

\subsection{The Yamabe reformulation}

Let \(M^n\) be a closed smooth manifold of dimension \(n\geq3\).
For a Riemannian metric \(g\), let \(\Sc_g\) denote its scalar curvature.
Every metric \(\widehat g\) in the conformal class \([g]\) can be written
uniquely as
\[
  \widehat g=u^{4/(n-2)}g
  \qquad\text{for some }u\in C^\infty(M),\quad u>0.
\]
Use the convention \(\Delta_g=\operatorname{div}_g\nabla_g\), so that
\(\int_M-u\Delta_gu\,d\vol_g=\int_M|\nabla_gu|^2\,d\vol_g\).
The conformal transformation laws are
\[
  \Sc_{\widehat g}
  =u^{-(n+2)/(n-2)}
    \left(-\frac{4(n-1)}{n-2}\Delta_gu+\Sc_gu\right),
  \qquad
  d\vol_{\widehat g}=u^{2n/(n-2)}\,d\vol_g.
\]
Consequently,
\begin{align*}
  \int_M\Sc_{\widehat g}\,d\vol_{\widehat g}
  &=\int_M\left(-\frac{4(n-1)}{n-2}u\Delta_gu+\Sc_gu^2\right)
    \,d\vol_g\\
  &=\int_M\left(\frac{4(n-1)}{n-2}|\nabla_gu|^2+\Sc_gu^2\right)
    \,d\vol_g.
\end{align*}
Moreover,
\[
  \vol_{\widehat g}(M)=\int_Mu^{2n/(n-2)}\,d\vol_g.
\]
The normalized total scalar curvature of \(\widehat g\) is therefore the
quotient below.  The coefficient \(4(n-1)/(n-2)\) comes from the conformal
transformation law for scalar curvature.  Define the Yamabe constant of
\([g]\) by
\begin{equation}
\label{eq:yamabe-constant-def}
  Y(M,[g])
  :=\inf_{\substack{u\in C^\infty(M)\\ u>0}}
  \frac{\displaystyle\int_M
    \left(\frac{4(n-1)}{n-2}|\nabla_g u|^2+\Sc_g u^2\right)
    \,d\vol_g}
       {\displaystyle
        \left(\int_M u^{2n/(n-2)}\,d\vol_g\right)^{(n-2)/n}}.
\end{equation}
The smooth Yamabe invariant of \(M\) is then
\begin{equation}
\label{eq:yamabe-invariant-def}
  \sigma(M):=\sup_{[g]}Y(M,[g]),
\end{equation}
where the supremum is taken over all conformal classes on \(M\).  For a
Riemannian metric \(g\), write \(\Sc_g^-:=\max\{-\Sc_g,0\}\), and define
\begin{equation}
\label{eq:scalar-cost-def}
  \Acal_n(M)
  :=\inf\{\vol_g(M):\Sc_g\geq-1\}.
\end{equation}
We call \(\Acal_n(M)\) the \emph{scalar cost} of \(M\).

\begin{lemma}
\label{lem:yamabe-reformulation}
Let \(M\) be a closed smooth \(n\)-manifold with \(n\geq3\).  Then
\begin{equation}
\label{eq:yamabe-three-functionals}
  \Acal_n(M)
  =\inf_g\int_M(\Sc_g^-)^{n/2}\,d\vol_g
  =\bigl|\min\{\sigma(M),0\}\bigr|^{n/2}.
\end{equation}
Moreover, for any fixed constant \(c_n\geq0\), the following assertions are
equivalent:
\begin{enumerate}[label=\textup{(\roman*)}]
\item For every closed oriented Riemannian \(n\)-manifold \((N,g)\) and every
\(\lambda\geq0\),
\[
  \Sc_g\geq-\lambda^2
  \quad\Longrightarrow\quad
  \lVert N\rVert\leq c_n\lambda^n\vol_g(N).
\]
\item For every closed oriented \(n\)-manifold \(N\),
\begin{equation}
\label{eq:minvol-gromov-equivalent}
  \lVert N\rVert\leq c_n\Acal_n(N).
\end{equation}
\item For every closed oriented \(n\)-manifold \(N\),
\begin{equation}
\label{eq:yamabe-gromov-equivalent}
  \lVert N\rVert
  \leq c_n\bigl|\min\{\sigma(N),0\}\bigr|^{n/2}.
\end{equation}
\item For every closed oriented Riemannian \(n\)-manifold \((N,g)\),
\begin{equation}
\label{eq:integrated-gromov-equivalent}
  \lVert N\rVert
  \leq c_n\int_N(\Sc_g^-)^{n/2}\,d\vol_g.
\end{equation}
\end{enumerate}
\end{lemma}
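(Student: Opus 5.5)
The plan is to establish the chain of equalities \eqref{eq:yamabe-three-functionals} by proving three inequalities. Write
\[
  B(M):=\inf_g\int_M(\Sc_g^-)^{n/2}\,d\vol_g,\qquad
  C(M):=\bigl|\min\{\sigma(M),0\}\bigr|^{n/2}.
\]
I will show \(B\le\Acal_n\), \(C\le B\) and \(\Acal_n\le C\). The equivalence of (i)--(iv) will then be formal.

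\(B\le\Acal_n\) is immediate. If \(\Sc_g\ge-1\), then \(\Sc_g^-\le1\), so
\[
  \int_M(\Sc_g^-)^{n/2}\,d\vol_g\le\vol_g(M).
\]

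\(C\le B\). Fix \(g\). In the Yamabe quotient \eqref{eq:yamabe-constant-def} use the test function \(u\equiv1\) and Hölder's inequality:
\[
  Y(M,[g])\le\frac{\int_M\Sc_g\,d\vol_g}{\vol_g(M)^{(n-2)/n}}
  \ge-\frac{\int_M\Sc_g^-\,d\vol_g}{\vol_g(M)^{(n-2)/n}}
\]
is the wrong direction, so a test function alone does not suffice. Instead I will use the standard fact that \(Y(M,[g])\ge-\lVert\Sc_g^-\rVert_{L^{n/2}(g)}\). This follows by applying Hölder to \(\int\Sc_gu^2\ge-\int\Sc_g^-u^2\):
\[
  \int_M\Sc_g^-u^2\,d\vol_g\le\lVert\Sc_g^-\rVert_{n/2}\,\lVert u\rVert_{2n/(n-2)}^2,
\]
and then discarding the gradient term. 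So for every \(g\), \(\sigma(M)\ge Y(M,[g])\ge-\lVert\Sc_g^-\rVert_{n/2}\). If \(\sigma(M)\ge0\) then \(C=0\) and there is nothing to prove. Otherwise \(|\sigma(M)|^{n/2}\le\int_M(\Sc_g^-)^{n/2}\,d\vol_g\), and taking the infimum over \(g\) gives \(C\le B\).

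\(\Acal_n\le C\). This is the step needing real input, namely the solution of the Yamabe problem.

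First suppose \(\sigma(M)\le0\). For every conformal class we have \(Y(M,[g])\le\sigma(M)\le0\). By Yamabe--Trudinger--Aubin--Schoen there is a constant scalar curvature metric \(\widehat g\in[g]\) with \(\Sc_{\widehat g}\equiv Y(M,[g])\) and \(\vol_{\widehat g}=1\). If \(Y<0\), rescale \(\widehat g\) to have \(\Sc\equiv-1\). Since \(\int\Sc\,d\vol\) scales like \(\vol^{(n-2)/n}\), the rescaled volume is \(|Y(M,[g])|^{n/2}\). Hence
\[
  \Acal_n\le|Y(M,[g])|^{n/2}.
\]
Taking the infimum over conformal classes with \(Y(M,[g])\) close to \(\sigma(M)\) gives \(\Acal_n\le|\sigma(M)|^{n/2}\). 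If \(Y(M,[g])=0\) is attained with \(\sigma=0\), the metric is scalar flat, so \(\Sc\ge-1\) at every scale and the volume can be shrunk to \(0\).

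Next suppose \(\sigma(M)>0\). Then some class has \(Y>0\), hence carries a metric of positive constant scalar curvature, and scaling it down gives \(\Acal_n=0=C\).

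Edge cases requiring care include whether \(Y\) can be negative when \(\sigma(M)<0\), but that is exactly the case handled above. I expect this direction to be the main obstacle, chiefly in quoting the resolution of the Yamabe problem correctly and tracking the scaling exponents.

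Equivalence of (i)--(iv). By scaling, (i) with \(\lambda>0\) is equivalent to the case \(\lambda=1\): replace \(g\) by \(\lambda^2g\). Then \(\Sc\ge-1\) and \(\lambda^n\vol_g=\vol_{\lambda^2g}\), so taking the infimum over admissible metrics, (i)\(\Rightarrow\)(ii). The case \(\lambda=0\) of (i) follows from (ii): a metric with \(\Sc\ge0\) shows \(\Acal_n=0\) by shrinking. Conversely, (ii)\(\Rightarrow\)(i) follows from the definition of \(\Acal_n\) after rescaling.

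Next, (ii)\(\Leftrightarrow\)(iii) holds because \(\Acal_n=C\), and (ii)\(\Leftrightarrow\)(iv) holds because \(\Acal_n=B\) is an infimum over exactly the quantities appearing in (iv).
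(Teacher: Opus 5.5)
Your argument is correct and is essentially the paper's: H\"older's inequality gives \(Y(M,[g])\geq-\lVert\Sc_g^-\rVert_{L^{n/2}(g)}\), Yamabe minimizers rescaled to \(\Sc\equiv-1\) (or shrunk when scalar-flat or of positive scalar curvature) give the reverse bound, and the equivalence of (i)--(iv) follows formally from scaling and the identity. Your cyclic ordering \(B\leq\Acal_n\leq C\leq B\) only saves the paper's separate check that \(I_n(M)\leq|\min\{\sigma(M),0\}|^{n/2}\), and the aborted test-function display with \(u\equiv1\), which chains \(\leq\) with \(\geq\), proves nothing and should be deleted.
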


\begin{proof}
Fix a Riemannian metric \(g\) on \(M\).  For every positive smooth
function \(u\), H\"older's inequality gives
\begin{align*}
 &\int_M
   \left(\frac{4(n-1)}{n-2}|\nabla_gu|^2+\Sc_gu^2\right)\,d\vol_g\\
 &\qquad\geq-\int_M\Sc_g^-u^2\,d\vol_g\\
 &\qquad\geq
 -\left(\int_M(\Sc_g^-)^{n/2}\,d\vol_g\right)^{2/n}
  \left(\int_Mu^{2n/(n-2)}\,d\vol_g\right)^{(n-2)/n}.
\end{align*}
Dividing by the last factor and taking the infimum over \(u\) yields
\begin{equation}
\label{eq:yamabe-negative-part}
  Y(M,[g])\geq
  -\left(\int_M(\Sc_g^-)^{n/2}\,d\vol_g\right)^{2/n}.
\end{equation}

Write
\[
  I_n(M):=\inf_g\int_M(\Sc_g^-)^{n/2}\,d\vol_g.
\]
We distinguish two cases.
\begin{itemize}
\item If \(\sigma(M)\leq0\), then \(Y(M,[g])\leq\sigma(M)\), and
\eqref{eq:yamabe-negative-part} implies
\[
  \left(\int_M(\Sc_g^-)^{n/2}\,d\vol_g\right)^{2/n}
  \geq -Y(M,[g])
  \geq -\sigma(M).
\]
Consequently, for every metric \(g\),
\[
  \int_M(\Sc_g^-)^{n/2}\,d\vol_g
  \geq |\sigma(M)|^{n/2}.
\]
Hence \(I_n(M)\geq |\sigma(M)|^{n/2}\).  Choose conformal classes
\([g_j]\) such that \(Y(M,[g_j])\to\sigma(M)\).  By the solution of the
Yamabe problem, each \([g_j]\) contains a unit-volume Yamabe minimizer
\(h_j\) (see \cite{AkutagawaIshidaLeBrun_2007}*{pp.~72--73}).  Its scalar
curvature is constant and equals \(Y(M,[g_j])\).  Thus
\[
  \int_M(\Sc_{h_j}^-)^{n/2}\,d\vol_{h_j}
  =|Y(M,[g_j])|^{n/2}
  \longrightarrow |\sigma(M)|^{n/2}.
\]
Therefore \(I_n(M)=|\sigma(M)|^{n/2}\).

\item If \(\sigma(M)>0\), then some conformal class has positive Yamabe
constant.  Its Yamabe minimizer has positive scalar curvature, so
\(I_n(M)=0\).
\end{itemize}
Consequently,
\begin{equation}
\label{eq:negative-part-yamabe}
  I_n(M)=\bigl|\min\{\sigma(M),0\}\bigr|^{n/2}.
\end{equation}

If \(\Sc_g\geq-1\), then \(\Sc_g^-\leq1\), so
\[
  \int_M(\Sc_g^-)^{n/2}\,d\vol_g\leq\vol_g(M),
\]
and taking the infimum over all such metrics gives
\(I_n(M)\leq\Acal_n(M)\).  For the reverse inequality, we again distinguish
cases according to the sign of \(\sigma(M)\).
\begin{itemize}
\item If \(\sigma(M)<0\), then for all large \(j\), the metric
\[
  \widetilde h_j
  :=|Y(M,[g_j])|h_j
\]
has scalar curvature \(-1\) and volume
\[
  \vol_{\widetilde h_j}(M)
  =|Y(M,[g_j])|^{n/2}
  \longrightarrow |\sigma(M)|^{n/2}.
\]
Here and below we use the scaling laws
\[
  \Sc_{ag}=a^{-1}\Sc_g,
  \qquad
  \vol_{ag}(M)=a^{n/2}\vol_g(M),
  \qquad a>0.
\]

\item Suppose that \(\sigma(M)=0\).  If \(Y(M,[g_j])<0\) along a
subsequence, the same rescaling produces metrics with scalar curvature
\(-1\) and volumes tending to zero.  Otherwise, \(Y(M,[g_j])=0\) for all
large \(j\); constant rescalings of a corresponding scalar-flat Yamabe
minimizer then have arbitrarily small volume.

\item If \(\sigma(M)>0\), choose a positive-scalar-curvature metric \(h\).
The metrics \(ah\), with \(a\to0\), still have scalar curvature bounded
below by \(-1\), while their volumes tend to zero.
\end{itemize}
Thus, in every case,
\[
  \Acal_n(M)
  \leq\bigl|\min\{\sigma(M),0\}\bigr|^{n/2}
  =I_n(M),
\]
where the equality follows from \eqref{eq:negative-part-yamabe}.  Together
with \(I_n(M)\leq\Acal_n(M)\), this proves
\eqref{eq:yamabe-three-functionals}.

It remains to prove the equivalence of \textup{(i)}--\textup{(iv)}.
Applying \textup{(i)} with \(\lambda=1\) and taking the infimum over all
metrics with \(\Sc_g\geq-1\) gives \textup{(ii)}.  Identity
\eqref{eq:yamabe-three-functionals} makes \textup{(ii)} and \textup{(iii)}
equivalent.  It also gives, for every Riemannian metric \(g\) on \(N\),
\[
  \Acal_n(N)
  \leq\int_N(\Sc_g^-)^{n/2}\,d\vol_g,
\]
so \textup{(ii)} implies \textup{(iv)}.  Finally, if
\(\Sc_g\geq-\lambda^2\), then \(\Sc_g^-\leq\lambda^2\), and hence
\[
  \int_N(\Sc_g^-)^{n/2}\,d\vol_g
  \leq\lambda^n\vol_g(N).
\]
Thus \textup{(iv)} implies \textup{(i)}, completing the cycle of
implications.
\end{proof}

In dimension four, Lemma~\ref{lem:yamabe-reformulation} gives, for every
Riemannian metric \(g\),
\begin{equation}
\label{eq:A4-negative-part}
  \Acal_4(M)=\bigl|\min\{\sigma(M),0\}\bigr|^2
  \leq\int_M(\Sc_g^-)^2\,d\vol_g.
\end{equation}
The main theorem reduces to the metric-independent estimate
\(\lVert M\rVert\leq C\Acal_4(M)\).

\subsection{Nef canonical bundles and Ricci lower bounds}

A K\"ahler form determines a Riemannian metric, its Ricci form represents
the first Chern class, and the top power of its cohomology class computes
the Riemannian volume.

Let \(X\) be a closed K\"ahler manifold of complex dimension \(m\), hence of
real dimension \(2m\), and use its complex orientation.  Its
\emph{canonical bundle} is the holomorphic line bundle
\begin{equation}
\label{eq:canonical-bundle-def}
  K_X:=\Lambda^m(T^{1,0}X)^*=\Lambda^{m,0}T^*X.
\end{equation}
Thus, in holomorphic coordinates, a local section of \(K_X\) has the form
\[
  f\,dz^1\wedge\cdots\wedge dz^m,
\]
where \(f\) is holomorphic.
We write \(c_1(X):=c_1(T^{1,0}X)\).  Since \(K_X\) is the dual of the
determinant line bundle of \(T^{1,0}X\),
\begin{equation}
\label{eq:canonical-first-chern}
  c_1(K_X)=-c_1(X).
\end{equation}
All first Chern classes below are viewed in real de Rham cohomology.

A K\"ahler form \(\omega\) is a closed real \(2\)-form satisfying
\[
  \omega(Ju,Jv)=\omega(u,v),
  \qquad
  \omega(u,Ju)>0\quad\text{for \(u\neq0\)}.
\]
It determines a Riemannian metric \(g_\omega\) by
\[
  \omega(u,v)=g_\omega(Ju,v),
  \qquad
  g_\omega(u,v)=\omega(u,Jv)
\]
for real tangent vectors \(u,v\).  In holomorphic coordinates, write
\(\omega=i g_{j\bar k}\,dz^j\wedge d\bar z^k\), where
\((g_{j\bar k})\) is a positive-definite Hermitian matrix.  The de Rham
cohomology class \([\omega]\in H^{1,1}(X;\mathbb R)\) is its
\emph{K\"ahler class}.  The Ricci form of \(\omega\) is
\begin{equation}
\label{eq:ricci-form-convention}
  \rho(\omega)
  :=-i\partial\bar\partial\log\det(g_{j\bar k}),
  \qquad
  [\rho(\omega)]=2\pi c_1(X)=-2\pi c_1(K_X).
\end{equation}
The real Ricci tensor of \(g_\omega\) and the Ricci form are related by
\begin{equation}
\label{eq:ricci-form-tensor-dictionary}
  \rho(\omega)(u,v)=\Ric_{g_\omega}(Ju,v).
\end{equation}
Because the Ricci tensor of a K\"ahler metric is \(J\)-invariant, the form
inequality \(\rho(\omega)\geq-\lambda\omega\) is equivalent to
\(\Ric_{g_\omega}\geq-\lambda g_\omega\).  Evaluating the form inequality
on \((u,Ju)\) gives
\[
  \Ric_{g_\omega}(u,u)
  =\rho(\omega)(u,Ju)
  \geq-\lambda\omega(u,Ju)
  =-\lambda g_\omega(u,u).
\]
Finally, if \(\alpha=[\omega]\in H^{1,1}(X;\mathbb R)\) is the K\"ahler
class, then the complex orientation gives
\begin{equation}
\label{eq:kahler-volume}
  d\vol_{g_\omega}=\frac{\omega^m}{m!},
  \qquad
  \vol_{g_\omega}(X)=\frac1{m!}\int_X\alpha^m.
\end{equation}
Thus the top self-intersection of \(\alpha\) determines the volume of the
associated metric.  Here and below,
\(\int_X\alpha^m\) means evaluation of the top-degree cohomology class
\(\alpha^m\) on the fundamental class of \(X\).

We use the complex Monge--Amp\`ere equation.  Recall that
\(d=\partial+\bar\partial\) and that, for a
real-valued function \(f\), the form \(i\partial\bar\partial f\) is real and
exact.  Hence \(\widehat\omega+i\partial\bar\partial f\), when positive,
is a K\"ahler form in the same cohomology class as \(\widehat\omega\).

\begin{lemma}
\label{lem:twisted-negative}
Let \(X\) be a closed K\"ahler manifold, let \(\alpha\) be a K\"ahler
class, and let \(\lambda>0\).  Let \(\theta\) be a smooth closed real
\((1,1)\)-form that is semipositive, meaning that
\(\theta(u,Ju)\geq0\) for every real tangent vector \(u\).  Suppose that
the following identity holds in real de Rham cohomology:
\begin{equation}
\label{eq:twisted-class-identity}
  2\pi c_1(X)=-\lambda\alpha+[\theta].
\end{equation}
Then there exists a K\"ahler form \(\omega\) with \([\omega]=\alpha\)
such that
\begin{equation}
\label{eq:twisted-ricci-equation}
  \rho(\omega)=-\lambda\omega+\theta.
\end{equation}
In particular, the associated real Riemannian metric \(g_\omega\) satisfies
\begin{equation}
\label{eq:twisted-real-ricci-bound}
  \Ric_{g_\omega}\geq-\lambda g_\omega.
\end{equation}
\end{lemma}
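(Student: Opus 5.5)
We reduce \eqref{eq:twisted-ricci-equation} to a complex Monge--Amp\`ere equation of Aubin--Yau type with negative sign, whose solvability needs no condition beyond compactness. Fix any K\"ahler form \(\widehat\omega\) with \([\widehat\omega]=\alpha\). By \eqref{eq:ricci-form-convention} and \eqref{eq:twisted-class-identity}, the closed real \((1,1)\)-form
\[
  \rho(\widehat\omega)+\lambda\widehat\omega-\theta
\]
is cohomologous to zero. By the \(\partial\bar\partial\)-lemma on the compact K\"ahler manifold \(X\), there is a smooth real function \(F\) with \(\rho(\widehat\omega)+\lambda\widehat\omega-\theta=i\partial\bar\partial F\).

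We look for \(\omega=\widehat\omega+i\partial\bar\partial\varphi>0\). Since \(\rho(\omega)=\rho(\widehat\omega)-i\partial\bar\partial\log(\omega^m/\widehat\omega^m)\), the desired identity \(\rho(\omega)=-\lambda\omega+\theta\) becomes
\[
  i\partial\bar\partial\Bigl(F-\log\frac{\omega^m}{\widehat\omega^m}+\lambda\varphi\Bigr)=0 .
\]
A pluriharmonic function on a compact manifold is constant. So it suffices to solve
\[
  (\widehat\omega+i\partial\bar\partial\varphi)^m=e^{\lambda\varphi+F}\,\widehat\omega^m,
  \qquad \widehat\omega+i\partial\bar\partial\varphi>0 .
\]
The additive constant can be absorbed into \(F\). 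Rescaling \(\varphi\) and the form by \(\lambda\), or simply treating general \(\lambda>0\) directly, this is exactly the equation solved by Aubin and Yau in the negative case. The main analytic input is existence of a smooth solution, which we take as known (Aubin, Yau). For completeness, the estimates run as follows:
\begin{itemize}
\item the \(C^0\) bound comes from the maximum principle, since at a maximum of \(\varphi\) we have \(\omega^m\le\widehat\omega^m\), giving \(\lambda\varphi\le-F\), and symmetrically at a minimum;
\item Yau's \(C^2\) estimate via the Chern--Lu/Aubin--Yau inequality for \(\log\operatorname{tr}_{\widehat\omega}\omega-C\varphi\);
\item Evans--Krylov or Calabi \(C^3\) estimates;
\item the continuity method in a parameter \(t\) multiplying \(F\), where openness follows from invertibility of \(\Delta_\omega-\lambda\) because \(\lambda>0\).
\end{itemize}
The only real obstacle is this analytic existence theorem, which is classical; nothing about \(\theta\) enters except its smoothness, since \(\theta\) is absorbed into \(F\).

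Finally, for the Ricci bound: \(\rho(\omega)+\lambda\omega=\theta\) is semipositive, so \(\rho(\omega)(u,Ju)\ge-\lambda\omega(u,Ju)\) for every real \(u\). By \eqref{eq:ricci-form-tensor-dictionary} and the computation preceding the lemma, this is \(\Ric_{g_\omega}(u,u)\ge-\lambda g_\omega(u,u)\), which is \eqref{eq:twisted-real-ricci-bound}.
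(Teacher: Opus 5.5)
Your proposal is correct and follows essentially the same route as the paper. Both use the $\partial\bar\partial$-lemma to write $\rho(\widehat\omega)+\lambda\widehat\omega-\theta=i\partial\bar\partial F$, solve the negative-sign Monge--Amp\`ere equation $(\widehat\omega+i\partial\bar\partial\varphi)^m=e^{\lambda\varphi+F}\widehat\omega^m$ by Aubin--Yau, and read off $\Ric_{g_\omega}\geq-\lambda g_\omega$ from the semipositivity of $\theta$; your reduction to $\lambda=1$ and sketch of the a priori estimates are correct but go beyond what the paper needs, since it simply cites Aubin--Yau.
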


\begin{proof}
Choose a K\"ahler form \(\widehat\omega\) with
\([\widehat\omega]=\alpha\).  By
\eqref{eq:ricci-form-convention}, the cohomological identity
\eqref{eq:twisted-class-identity} gives
\[
  [\rho(\widehat\omega)+\lambda\widehat\omega-\theta]
  =2\pi c_1(X)+\lambda\alpha-[\theta]=0.
\]
Hence \(\rho(\widehat\omega)+\lambda\widehat\omega-\theta\) is a real exact
\((1,1)\)-form.  On a closed K\"ahler manifold, the
\(\partial\bar\partial\)-lemma says that every such form is
\(i\partial\bar\partial\) of a real-valued function.  Hence there is a
smooth real function \(F\) such that
\begin{equation}
\label{eq:twisted-potential}
  \rho(\widehat\omega)+\lambda\widehat\omega-\theta
  =i\partial\bar\partial F.
\end{equation}
The Aubin--Yau theorem in the negative case solves
\begin{equation}
\label{eq:negative-ma}
  (\widehat\omega+i\partial\bar\partial\varphi)^m
  =e^{\lambda\varphi+F}\widehat\omega^m.
\end{equation}
It provides a smooth real function \(\varphi\) such that
\(\widehat\omega+i\partial\bar\partial\varphi\) is positive and
\eqref{eq:negative-ma} holds.  The sign \(+\lambda\varphi\) is the negative
Einstein sign; because \(\lambda>0\), no integral normalization of \(F\) is
needed (see \cite{Aubin_Monge_Ampere_1978} and
\cite{Yau_Ricci_Kahler_1978}).

Set
\[
  \omega:=\widehat\omega+i\partial\bar\partial\varphi.
\]
Positivity makes \(\omega\) a K\"ahler form, while exactness of
\(i\partial\bar\partial\varphi\) gives
\([\omega]=[\widehat\omega]=\alpha\).  From the definition of the Ricci
form,
\begin{equation}
\label{eq:ricci-volume-ratio}
  \rho(\omega)-\rho(\widehat\omega)
  =-i\partial\bar\partial
    \log\frac{\omega^m}{\widehat\omega^m}.
\end{equation}
On the other hand, taking the logarithm of
\eqref{eq:negative-ma} gives
\[
  \log\frac{\omega^m}{\widehat\omega^m}
  =\lambda\varphi+F.
\]
Substituting this identity into \eqref{eq:ricci-volume-ratio} and then using
\eqref{eq:twisted-potential}, we obtain
\begin{align*}
  \rho(\omega)
  &=\rho(\widehat\omega)
    -i\partial\bar\partial(\lambda\varphi+F)\\
  &=-\lambda
    (\widehat\omega+i\partial\bar\partial\varphi)+\theta\\
  &=-\lambda\omega+\theta.
\end{align*}
To obtain the Riemannian estimate, let \(u\) be a real tangent vector.  The
\(J\)-invariance of the Ricci tensor gives
\[
\begin{aligned}
  \Ric_{g_\omega}(u,u)
  &=\rho(\omega)(u,Ju)\\
  &=-\lambda\omega(u,Ju)+\theta(u,Ju)\\
  &\geq-\lambda g_\omega(u,u).
\end{aligned}
\]
Hence \(\Ric_{g_\omega}\geq-\lambda g_\omega\).
\end{proof}

To apply the lemma, consider the K\"ahler cone
\[
  \mathcal K_X
  :=\bigl\{[\omega]\in H^{1,1}(X;\mathbb R):
       \omega\text{ is a K\"ahler form}\bigr\}.
\]
A real \((1,1)\)-class \(\beta\) is \emph{nef} if
\(\beta\in\overline{\mathcal K_X}\).  Equivalently, after fixing any
K\"ahler form \(\omega_0\),
\begin{equation}
\label{eq:nef-characterization}
  \beta+\eps[\omega_0]\in\mathcal K_X
  \qquad\text{for every \(\eps>0\)}.
\end{equation}
The canonical bundle \(K_X\) is nef when \(c_1(K_X)\) is nef.  This does
not require \(c_1(K_X)\) itself to contain a smooth semipositive form;
each perturbed class \(c_1(K_X)+\eps[\omega_0]\) is K\"ahler.  We use these
classes for \(\eps>0\).

When \(K_X\) is nef, its \emph{canonical volume} is
\begin{equation}
\label{eq:canonical-volume-def}
  \int_Xc_1(K_X)^m.
\end{equation}
This number is nonnegative, since
\begin{equation}
\label{eq:canonical-volume-limit}
  \int_Xc_1(K_X)^m
  =\lim_{\eps\downarrow0}
   \int_X\bigl(c_1(K_X)+\eps[\omega_0]\bigr)^m,
\end{equation}
and every class in the limit is K\"ahler.

The Riemannian estimate is Gromov's Main Inequality.  If
\((N,g)\) is a closed oriented Riemannian \(d\)-manifold, \(d\geq2\), with
\(\Ric_g\geq-(d-1)g\), then
\begin{equation}
\label{eq:gromov-ricci-bound}
  \lVert N\rVert\leq d!(d-1)^d\vol_g(N).
\end{equation}
To see the normalization, set \(h:=(d-1)^2g\).  A constant rescaling does
not change the Ricci tensor as a \((0,2)\)-tensor, and hence
\[
  \Ric_h=\Ric_g\geq-(d-1)g=-\frac1{d-1}h.
\]
Gromov's inequality in this normalization gives
\(\lVert N\rVert\leq C_d\vol_h(N)\), where \(C_d\leq d!\)
(see \cite{gromov_bounded_cohomology_1982}*{Section~0.5, Main Inequality}).
Since \(\vol_h(N)=(d-1)^d\vol_g(N)\), replacing \(C_d\) by \(d!\) gives
\eqref{eq:gromov-ricci-bound}.

For a complex \(m\)-manifold, the real dimension is \(d=2m\).  Set
\(\lambda=2m-1\); the metrics constructed by
Lemma~\ref{lem:twisted-negative} then satisfy the hypothesis of Gromov's
estimate.  The case \(\lambda=1\) appears in the work of
Damin Wu and Shing-Tung Yau (see
\cite{WuYau_negative_holomorphic_2016}*{Proposition~8\textup{(i)}}).

\begin{proposition}
\label{prop:nef-canonical}
Let \(X\) be a closed K\"ahler manifold of complex dimension \(m\geq1\) whose
canonical bundle is nef.  Then
\begin{equation}
\label{eq:nef-canonical}
  \lVert X\rVert
  \leq
  \frac{(2m)!}{m!}(2\pi)^m(2m-1)^m
  \int_Xc_1(K_X)^m.
\end{equation}
In particular, if the canonical volume vanishes, then
\(\lVert X\rVert=0\).
\end{proposition}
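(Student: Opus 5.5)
The plan is to apply Lemma~\ref{lem:twisted-negative} to the perturbed classes \(c_1(K_X)+\eps[\omega_0]\), feed the resulting metrics into Gromov's estimate \eqref{eq:gromov-ricci-bound}, and then let \(\eps\downarrow0\) using \eqref{eq:canonical-volume-limit}.

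Set \(\lambda:=2m-1>0\) and fix a K\"ahler form \(\omega_0\). For \(\eps>0\), nefness of \(K_X\) and \eqref{eq:nef-characterization} make \(c_1(K_X)+\eps[\omega_0]\) a K\"ahler class. Define
\[
  \alpha_\eps:=\frac{2\pi}{\lambda}\bigl(c_1(K_X)+\eps[\omega_0]\bigr),
\]
which is again K\"ahler, and take \(\theta:=\frac{2\pi\eps}{\lambda}\cdot\lambda\,\omega_0=2\pi\eps\,\omega_0\). This \(\theta\) is a smooth closed semipositive (indeed positive) real \((1,1)\)-form. By \eqref{eq:canonical-first-chern},
\[
  -\lambda\alpha_\eps+[\theta]
  =-2\pi c_1(K_X)-2\pi\eps[\omega_0]+2\pi\eps[\omega_0]
  =2\pi c_1(X),
\]
which is exactly \eqref{eq:twisted-class-identity}. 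Lemma~\ref{lem:twisted-negative} then yields a K\"ahler form \(\omega_\eps\in\alpha_\eps\) with \(\Ric_{g_{\omega_\eps}}\geq-(2m-1)g_{\omega_\eps}\). This is the normalization required by \eqref{eq:gromov-ricci-bound} with \(d=2m\).

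Gromov's inequality \eqref{eq:gromov-ricci-bound} together with the volume formula \eqref{eq:kahler-volume} now gives
\[
  \lVert X\rVert\leq(2m)!\,(2m-1)^{2m}\,\frac1{m!}\int_X\alpha_\eps^m
  =\frac{(2m)!}{m!}(2m-1)^{2m}\Bigl(\frac{2\pi}{2m-1}\Bigr)^m
   \int_X\bigl(c_1(K_X)+\eps[\omega_0]\bigr)^m .
\]
Simplifying, \((2m-1)^{2m}(2m-1)^{-m}=(2m-1)^m\). Letting \(\eps\downarrow0\) and using \eqref{eq:canonical-volume-limit} gives \eqref{eq:nef-canonical}. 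If the canonical volume vanishes, the right-hand side is zero, so \(\lVert X\rVert=0\).

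The only delicate point is the case \(m=1\), where \(\lambda=1\) and the argument runs identically; Gromov's estimate is stated for \(d\geq2\), which covers it. Otherwise the proof is essentially bookkeeping. The real content is already in Lemma~\ref{lem:twisted-negative}: we cannot work in the limit class directly, because \(c_1(K_X)\) need not be K\"ahler. The twisting form \(\theta\) absorbs the perturbation while preserving the Ricci lower bound, and since the bound on \(\lVert X\rVert\) holds uniformly in \(\eps\), passing to the limit costs nothing.
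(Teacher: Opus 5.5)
Your proposal is correct and follows the paper's proof step for step. It makes the same choices: $\lambda=2m-1$, the perturbed classes $\alpha_\eps=\frac{2\pi}{\lambda}\bigl(c_1(K_X)+\eps[\omega_0]\bigr)$ and the twisting form $\theta_\eps=2\pi\eps\,\omega_0$, then applies Gromov's Ricci bound in real dimension $2m$ and passes to the limit $\eps\downarrow0$ in the cohomological volumes only.
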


\begin{proof}
Set
\[
  \lambda:=2m-1,
\]
the constant in Gromov's real \(2m\)-dimensional Ricci bound.
Fix a K\"ahler form \(\omega_0\).  Since \(K_X\) is nef,
\eqref{eq:nef-characterization} shows that
\(c_1(K_X)+\eps[\omega_0]\) is a K\"ahler class for every \(\eps>0\).

For \(\eps>0\), define the K\"ahler class
\begin{equation}
\label{eq:alpha-epsilon}
  \alpha_\eps
  :=\frac{2\pi}{\lambda}
    \bigl(c_1(K_X)+\eps[\omega_0]\bigr)
\end{equation}
and the smooth closed positive \((1,1)\)-form
\[
  \theta_\eps:=2\pi\eps\omega_0.
\]
The factor \(2\pi\) matches the normalization
\([\rho(\omega)]=2\pi c_1(X)\) in
\eqref{eq:ricci-form-convention}.  Using
\(c_1(X)=-c_1(K_X)\), we compute
\begin{align*}
  -\lambda\alpha_\eps+[\theta_\eps]
  &=-2\pi\bigl(c_1(K_X)+\eps[\omega_0]\bigr)
    +2\pi\eps[\omega_0]\\
  &=-2\pi c_1(K_X)
   =2\pi c_1(X),
\end{align*}
so Lemma~\ref{lem:twisted-negative} applies to
\((\alpha_\eps,\theta_\eps)\).  It gives a K\"ahler form \(\omega_\eps\)
with \([\omega_\eps]=\alpha_\eps\).  For its associated real Riemannian
metric \(g_\eps:=g_{\omega_\eps}\), the conclusion of the lemma is
\[
  \Ric_{g_\eps}\geq-(2m-1)g_\eps.
\]

Applying \eqref{eq:gromov-ricci-bound} in real dimension \(2m\) gives
\begin{equation}
\label{eq:gromov-epsilon}
  \lVert X\rVert
  \leq (2m)!(2m-1)^{2m}\vol_{g_\eps}(X).
\end{equation}
The K\"ahler volume identity \eqref{eq:kahler-volume} and
\([\omega_\eps]=\alpha_\eps\) give
\begin{align*}
  \vol_{g_\eps}(X)
  &=\frac1{m!}\int_X\omega_\eps^m
    =\frac1{m!}\int_X\alpha_\eps^m\\
  &=\frac1{m!}\left(\frac{2\pi}{2m-1}\right)^m
    \int_X\bigl(c_1(K_X)+\eps[\omega_0]\bigr)^m.
\end{align*}
The last integral depends only on cohomology, and
\begin{equation}
\label{eq:canonical-intersection-expansion}
  \int_X\bigl(c_1(K_X)+\eps[\omega_0]\bigr)^m
  =\sum_{j=0}^m\binom mj\eps^j
    \int_Xc_1(K_X)^{m-j}[\omega_0]^j.
\end{equation}
Hence
\[
  \lim_{\eps\downarrow0}
  \int_X\bigl(c_1(K_X)+\eps[\omega_0]\bigr)^m
  =\int_Xc_1(K_X)^m.
\]
The left-hand side of \eqref{eq:gromov-epsilon} is independent of
\(\eps\).  We may therefore substitute the volume formula and let
\(\eps\downarrow0\), obtaining
\[
\begin{aligned}
  \lVert X\rVert
  &\leq (2m)!(2m-1)^{2m}\frac1{m!}
     \left(\frac{2\pi}{2m-1}\right)^m
     \int_Xc_1(K_X)^m\\
  &=\frac{(2m)!}{m!}(2\pi)^m(2m-1)^m
     \int_Xc_1(K_X)^m.
\end{aligned}
\]
If the canonical volume vanishes, the right-hand side is zero, and hence
\(\lVert X\rVert=0\).  The argument takes a limit only of the
cohomological volumes \(\int_X\alpha_\eps^m\); it requires no convergence
of the metrics \(g_\eps\).
\end{proof}

\section{Closed K\"ahler surfaces and bidisk quotients}
\label{sec:surfaces}

We use Section~\ref{sec:scalar-nef} and the classification of closed
K\"ahler surfaces.

\subsection{General type and scalar cost}

Let \(M\) be a closed K\"ahler surface, and let \(K_M\) be its canonical
bundle.  For each integer \(\ell\geq1\), the \(\ell\)-th
\emph{plurigenus} is
\[
  P_\ell(M):=\dim_{\mathbb C}H^0(M,K_M^{\otimes\ell}),
\]
where \(H^0(M,K_M^{\otimes\ell})\) is the vector space of holomorphic
sections of the \(\ell\)-th tensor power of \(K_M\).  If all the
plurigenera vanish, the Kodaira dimension of \(M\) is
\(\kappa(M)=-\infty\).  Otherwise,
\[
  \kappa(M)
  :=\limsup_{\ell\to\infty}
    \frac{\log P_\ell(M)}{\log\ell}.
\]
with the convention \(\log0=-\infty\).
For a K\"ahler surface, \(\kappa(M)\in\{-\infty,0,1,2\}\).  The surface is
\emph{of general type} if \(\kappa(M)=2\), or equivalently, if its
plurigenera have quadratic growth
(see \cite{LeBrun_kodaira_yamabe_1999}*{pp.~133--134}).

Kodaira dimension is invariant under blow-up.  A \emph{\((-1)\)-curve} is a
holomorphically embedded copy of \(\CP^1\) with self-intersection \(-1\),
and a complex surface is \emph{minimal} if it contains no
\((-1)\)-curves.  If \(M\) is of general type, successively blowing down
its \((-1)\)-curves produces its \emph{minimal model} \(S\), which is
unique up to biholomorphism.  The canonical bundle \(K_S\) is nef and big;
for a nef line bundle on a surface, bigness is equivalent to positive
self-intersection.  Hence
\[
  c_1^2(S):=\int_S c_1(K_S)^2>0.
\]
This is the canonical self-intersection of \(S\).  Since
\(c_1(K_S)=-c_1(T^{1,0}S)\), it also agrees with the usual Chern number
\(\int_S c_1(T^{1,0}S)^2\).

\begin{lemma}
\label{lem:surface-reduction}
Let \(M\) be a closed K\"ahler surface.
\begin{enumerate}[label=\textup{(\roman*)}]
\item If \(M\) is not of general type, then \(\lVert M\rVert=0\).
\item If \(M\) is of general type and \(S\) is its minimal model, then
\begin{equation}
\label{eq:surface-reduction-data}
  \lVert M\rVert=\lVert S\rVert,
  \qquad
  \sigma(M)=-4\pi\sqrt{2c_1^2(S)},
  \qquad
  \Acal_4(M)=32\pi^2c_1^2(S)=\Acal_4(S).
\end{equation}
\end{enumerate}
In particular, \(\lVert M\rVert>0\) forces \(M\) to be of general type,
whereas \(\sigma(M)\geq0\) forces \(\lVert M\rVert=0\).
\end{lemma}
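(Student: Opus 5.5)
The plan is to treat the two parts separately, using the Kodaira classification in (i) and blow-up invariance together with LeBrun's computation of the Yamabe invariant in (ii).

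For (i), suppose \(\kappa(M)\in\{-\infty,0,1\}\) and split by Kodaira dimension.
\begin{itemize}
\item If \(\kappa(M)\in\{0,1\}\), pass to the minimal model \(S\). Connected sum with \(\overline{\CP}^2\) does not change simplicial volume in dimension four, so \(\lVert M\rVert=\lVert S\rVert\). For minimal \(S\) with \(\kappa\ge 0\), \(K_S\) is nef, and \(c_1^2(S)=0\) because the surface is not of general type. Proposition~\ref{prop:nef-canonical} with \(m=2\) then gives \(\lVert S\rVert=0\) directly.
\item If \(\kappa(M)=-\infty\), then \(M\) is rational or ruled. Rational surfaces are simply connected, and their simplicial volume vanishes (e.g.\ by connected-sum invariance, reducing to \(S^4\), which has vanishing simplicial volume). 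For a blow-up of a ruled surface over a curve \(C\), reduce to a \(\CP^1\)-bundle over \(C\). A fiber bundle with fiber \(S^2\) has vanishing simplicial volume: \(\lVert S^2\rVert=0\), and Gromov's fibration theorem for amenable fibre fundamental group applies. Alternatively, \(\pi_1\) is that of \(C\), and the fibration factors the fundamental class through a \(2\)-dimensional space. The paper's outline instead cites Paternain--Petean collapse, i.e.\ \(\mathcal F\)-structures give zero minimal volume. I would use that uniformly for \(\kappa\le1\) if the nef argument seems unwanted.
\end{itemize}

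For (ii), the equality \(\lVert M\rVert=\lVert S\rVert\) follows since \(M\cong S\#k\overline{\CP}^2\) diffeomorphically and connected sum with a simply connected closed manifold preserves simplicial volume. The Yamabe value \(\sigma(M)=-4\pi\sqrt{2c_1^2(S)}\) is LeBrun's theorem for surfaces of general type, including non-minimal ones. I will cite LeBrun's Kodaira-dimension and Yamabe paper for this rather than reprove it. The proof there uses Seiberg--Witten estimates \(\int \Sc^2\ge 32\pi^2c_1^2(S)\) for every metric, and uses Kähler--Einstein (orbifold) metrics on the canonical model, together with gluing for blow-ups, to show sharpness.

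It then remains to apply Lemma~\ref{lem:yamabe-reformulation} with \(n=4\). Since \(\sigma(M)<0\), we get
\[
\Acal_4(M)=\sigma(M)^2=16\pi^2\cdot 2c_1^2(S)=32\pi^2c_1^2(S).
\]
Applying the same reasoning to \(S\), which is its own minimal model, gives \(\Acal_4(S)=32\pi^2c_1^2(S)\).

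The final remarks follow by contraposition. If \(\lVert M\rVert>0\), then (i) forces general type. If \(\sigma(M)\ge0\), then \(M\) cannot be of general type, since that would give \(\sigma(M)<0\) because \(c_1^2(S)>0\); hence (i) applies and \(\lVert M\rVert=0\).

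The main obstacle is the \(\kappa=-\infty\) ruled case and the overall bookkeeping of (i). I would handle it by citing Paternain--Petean together with Gromov's inequality, which gives \(\lVert M\rVert\le C\,\mathrm{MinVol}=0\). The delicate input in (ii) is purely citational, namely LeBrun's formula.
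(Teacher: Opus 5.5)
Your proposal is correct. Part (ii) and the closing implications follow the paper exactly: blow-up invariance of simplicial volume, then LeBrun's formula, then Lemma~\ref{lem:yamabe-reformulation}.

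Part (i) takes a different route. The paper handles every non-general-type surface at once: Paternain--Petean collapse with $\sec\geq-1$ and volume tending to zero, fed into Gromov's Ricci estimate \eqref{eq:gromov-ricci-bound}. You instead split by Kodaira dimension.
\begin{itemize}
\item For $\kappa\in\{0,1\}$ you reuse the paper's own Proposition~\ref{prop:nef-canonical}. The minimal model has nef $K_S$ with $c_1^2(S)=0$, so the canonical volume vanishes and $\lVert S\rVert=0$.
\item For $\kappa=-\infty$ you use classification plus amenability. Even $b_1$ excludes class VII, so $M$ is rational or ruled. Rational surfaces are simply connected. For a $\CP^1$-bundle $p\colon E\to C$, the map $p$ is a $\pi_1$-isomorphism, so Gromov's mapping theorem makes $p^*$ an isometric isomorphism on bounded cohomology. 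Hence $\lVert E\rVert=\lVert p_*[E]\rVert_1=0$, since $H_4(C;\mathbb R)=0$. This is the precise content of your ``factors through a 2-dimensional space'' remark.
\end{itemize}

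Your route avoids the deep collapse theorem, and it shows that the nef-canonical estimate already covers elliptic and Kodaira-dimension-zero surfaces. Its cost is more classification bookkeeping. In particular you must check that $c_1(K_S)$ lies in $\overline{\mathcal K_S}$ in the paper's sense also for non-projective $S$. This holds: $c_1(K_S)_{\mathbb R}=0$ when $\kappa=0$, and when $\kappa=1$ it is represented by the pullback of a semipositive form from the base of the elliptic fibration. The paper's route is shorter and uniform, at the price of citing Paternain--Petean.
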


\begin{proof}
Paternain and Petean proved that every closed K\"ahler surface that is not
of general type collapses with sectional curvature bounded from below
(see \cite{PaternainPetean_complex_surfaces_2004}*{Theorem~A and the
following Corollary}).  After rescaling the collapsing metrics, we obtain
a sequence \(g_j\) such that
\[
  \sec_{g_j}\geq-1,
  \qquad
  \vol_{g_j}(M)\longrightarrow0.
\]
In real dimension four this gives \(\Ric_{g_j}\geq-3g_j\).  Applying
\eqref{eq:gromov-ricci-bound} with \(d=4\) and then letting \(j\to\infty\)
gives
\[
  0\leq\lVert M\rVert
  \leq4!\,3^4\vol_{g_j}(M)\longrightarrow0,
\]
and hence \(\lVert M\rVert=0\).

Suppose now that \(M\) is of general type.  It is obtained from its minimal
model \(S\) by finitely many blow-ups, so
\(M\cong S\#r\,\overline{\CP}^{2}\) for some \(r\geq0\).  Simplicial
volume is unchanged by connected sum with the simply connected manifold
\(\overline{\CP}^{2}\), and hence
\(\lVert M\rVert=\lVert S\rVert\).  LeBrun's formula for a surface of
general type and its blow-ups
(see \cite{LeBrun_kodaira_yamabe_1999}*{Theorem~2}) gives
\begin{equation}
\label{eq:lebrun-yamabe}
  \sigma(M)=-4\pi\sqrt{2c_1^2(S)}<0.
\end{equation}
Lemma~\ref{lem:yamabe-reformulation} therefore yields
\[
  \Acal_4(M)=|\sigma(M)|^2
  =32\pi^2c_1^2(S)=\Acal_4(S),
\]
which proves~\textup{(ii)}.  Part~\textup{(i)} and
\eqref{eq:surface-reduction-data} give the last two implications.
\end{proof}

LeBrun's formula determines the scalar cost of a surface of general type;
Proposition~\ref{prop:nef-canonical} gives the corresponding upper bound for
simplicial volume.

\begin{proof}[Proof of Theorem~\ref{thm:main}]
If \(M\) is not of general type, then
Lemma~\ref{lem:surface-reduction} gives \(\lVert M\rVert=0\), and both
inequalities in Theorem~\ref{thm:main} follow because their right-hand
sides are nonnegative.

Suppose that \(M\) is of general type, and let
\(S\) be its minimal model.  The canonical bundle \(K_S\) is nef and big.
Applying Proposition~\ref{prop:nef-canonical} with \(m=2\) gives
\begin{equation}\label{eq:SV_boundby_K}
  \lVert S\rVert
  \leq \frac{4!}{2!}(2\pi)^2 3^2 c_1^2(S) =432\pi^2c_1^2(S).
\end{equation}
Using \eqref{eq:surface-reduction-data}, we obtain
\[
  \lVert M\rVert=\lVert S\rVert
  \leq432\pi^2c_1^2(S)
  =\frac{27}{2}\Acal_4(M).
\]
For an arbitrary Riemannian metric \(g\) on \(M\),
\eqref{eq:A4-negative-part} now gives
\[
  \lVert M\rVert
  \leq\frac{27}{2}\int_M(\Sc_g^-)^2\,d\vol_g,
\]
which is \eqref{eq:main-integrated}.  If
\(\Sc_g\geq-\lambda^2\), then \(\Sc_g^-\leq\lambda^2\), and therefore
\[
  \int_M(\Sc_g^-)^2\,d\vol_g
  \leq\lambda^4\vol_g(M).
\]
This is \eqref{eq:main-pointwise}.
\end{proof}

For a closed four-manifold with \(\Acal_4(M)>0\), define
\begin{equation}
\label{eq:R4-def}
  \Rcal_4(M):=\frac{\lVert M\rVert}{\Acal_4(M)}.
\end{equation}
Lemma~\ref{lem:yamabe-reformulation} identifies \(\Rcal_4(M)\) with the
smallest coefficient in Gromov's scalar-curvature inequality for the fixed
smooth manifold \(M\).  Moreover, \eqref{eq:surface-reduction-data} shows
that an estimate
\(\lVert M\rVert\leq A c_1^2(S)\) for surfaces of general type is equivalent
to Gromov's scalar-curvature inequality on this class with coefficient
\(A/(32\pi^2)\).  Proposition~\ref{prop:nef-canonical} gives
\(A=432\pi^2\).

\subsection{Bidisk quotients and the optimal fixed-manifold coefficient}

The uniform estimate above gives the coefficient \(27/2\).  For bidisk
quotients, the fixed-manifold coefficient has a closed formula.  Let
\(\HH^2\) denote the hyperbolic plane of sectional curvature \(-1\).

\begin{proposition}
\label{prop:bidisk}
Let
\[
  X=\Gamma\backslash(\HH^2\times\HH^2)
\]
be a closed quotient with its product complex orientation, where
\(\Gamma<\operatorname{PSL}_2(\mathbb R)\times
\operatorname{PSL}_2(\mathbb R)\) is torsion-free and cocompact.  For
\(r\geq0\), put \(M=X\#r\,\overline{\CP}^{2}\).  If a Riemannian metric
\(g\) on \(M\) satisfies \(\Sc_g\geq-\lambda^2\) for some \(\lambda\geq0\),
then
\begin{equation}
\label{eq:bidisk-bound}
  \lVert M\rVert
  \leq\frac{3}{32\pi^2}\,\lambda^4\vol_g(M).
\end{equation}
The coefficient \(3/(32\pi^2)\) is optimal for the fixed smooth manifold
\(M\).  For \(r=0\), equality is attained by the locally symmetric product
metric.
\end{proposition}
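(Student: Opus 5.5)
The plan is to compute both sides of \(\Rcal_4(M)\) exactly: \(\lVert M\rVert\) through Bucher's computation for products of surfaces, and \(\Acal_4(M)\) through Lemma~\ref{lem:surface-reduction}.

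\textbf{Simplicial volume.} Since \(X\) is a compact quotient of a bidisk, its canonical bundle is ample, so \(X\) is minimal and of general type. Hence \(X\) is the minimal model of \(M\), and Lemma~\ref{lem:surface-reduction} gives \(\lVert M\rVert=\lVert X\rVert\). Bucher proved that for \(\HH^2\times\HH^2\) manifolds
\[
  \lVert X\rVert=\frac{3}{2\pi^2}\vol(X).
\]
Here \(\vol\) is taken in the product metric \(g_0\) of curvature \(-1\) on each factor. This is the proportionality constant obtained from \(\lVert\Sigma_1\times\Sigma_2\rVert=\tfrac32\lVert\Sigma_1\rVert\lVert\Sigma_2\rVert\) together with Gromov's proportionality principle.

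\textbf{Scalar cost.} Next compute \(c_1^2(X)\) in terms of \(\vol_{g_0}(X)\). The Ricci form of the product Kähler metric is \(\rho=-\omega\), since each factor has Gauss curvature \(-1\). Thus \(2\pi c_1(K_X)=[\omega]\), and
\[
  c_1^2(X)=\frac{1}{4\pi^2}\int_X\omega^2=\frac{2}{4\pi^2}\vol_{g_0}(X)=\frac{1}{2\pi^2}\vol_{g_0}(X).
\]
Lemma~\ref{lem:surface-reduction} then gives \(\Acal_4(M)=32\pi^2c_1^2(X)=16\vol_{g_0}(X)\). Combining the two computations,
\[
  \Rcal_4(M)=\frac{3}{2\pi^2}\cdot\frac{1}{16}=\frac{3}{32\pi^2}.
\]
By Lemma~\ref{lem:yamabe-reformulation}, specifically (ii) implies (i) applied to the single manifold \(M\), we obtain \(\lVert M\rVert\leq\Rcal_4(M)\Acal_4(M)\leq\Rcal_4(M)\lambda^4\vol_g(M)\) whenever \(\Sc_g\geq-\lambda^2\). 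This is \eqref{eq:bidisk-bound}.

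\textbf{Optimality.} Because \(\Acal_4(M)\) is an infimum, any coefficient \(c<3/(32\pi^2)\) fails. Indeed, there are metrics with \(\Sc\geq-1\) and volume arbitrarily close to \(\Acal_4(M)\), which would force \(\lVert M\rVert\leq c\Acal_4(M)<\lVert M\rVert\). The right-hand side is positive because \(c_1^2(X)>0\).

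\textbf{Equality for \(r=0\).} The metric \(g_0\) has \(\Sc_{g_0}=-4\). Taking \(\lambda=2\), the right-hand side equals \(\tfrac{3}{32\pi^2}\cdot16\vol_{g_0}(X)=\tfrac{3}{2\pi^2}\vol_{g_0}(X)=\lVert X\rVert\), so equality holds.

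\textbf{Main obstacle.} The only nontrivial external input is Bucher's value \(\lVert\Sigma_1\times\Sigma_2\rVert=\tfrac32\lVert\Sigma_1\rVert\lVert\Sigma_2\rVert\), transferred to irreducible quotients via proportionality. The rest is bookkeeping with the normalizations: the Kähler volume \(\omega^2/2\) and \([\rho]=2\pi c_1\).
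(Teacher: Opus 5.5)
Your proof is correct and follows the same route as the paper. Both use Bucher-Karlsson's exact value of \(\lVert X\rVert\), blow-up invariance together with LeBrun's formula (via Lemma~\ref{lem:surface-reduction}) for \(\Acal_4(M)\), and Lemma~\ref{lem:yamabe-reformulation} both to pass to arbitrary metrics and to get optimality from the infimum. The only difference is normalization: the paper works in terms of \(\chi(X)\), using \(\lVert X\rVert=6\chi(X)\) and \(c_1^2(X)=2\chi(X)\) from the parallel splitting \(T^{1,0}X=L_1\oplus L_2\), whereas you work in terms of \(\vol_{g_0}(X)\) via \(\rho=-\omega\), which makes the \(r=0\) equality check immediate without Chern--Gauss--Bonnet.
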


\begin{proof}
Bucher--Karlsson's computation gives
\begin{equation}
\label{eq:bidisk-simplicial}
  \lVert X\rVert=6\chi(X)
\end{equation}
for every closed bidisk quotient, reducible or irreducible
(see \cite{BucherKarlsson_h2xh2_2008}*{Theorem~1}).  Since simplicial
volume is unchanged by blow-up, \(\lVert M\rVert=\lVert X\rVert\).

The product splitting on \(\HH^2\times\HH^2\) is \(\Gamma\)-invariant and
therefore descends to a parallel holomorphic decomposition
\[
  T^{1,0}X=L_1\oplus L_2
\]
into complex line bundles.  The invariant Chern-form representative of
\(c_1(L_i)\) lifts to a form pulled back from the \(i\)-th factor.  Its
square vanishes pointwise, so \(c_1(L_i)^2=0\), including when \(\Gamma\)
is irreducible.  The Whitney product formula gives
\(c_2(T^{1,0}X)=c_1(L_1)c_1(L_2)\).  Hence
\begin{align}
\label{eq:bidisk-chern}
  c_1^2(X)
  &=\int_X\bigl(c_1(L_1)+c_1(L_2)\bigr)^2\notag\\
  &=2\int_Xc_1(L_1)c_1(L_2)
   =2\int_Xc_2(T^{1,0}X)
   =2\chi(X).
\end{align}
Here \(c_1^2(X)=\int_X c_1(K_X)^2\); the square is unchanged when
\(c_1(T^{1,0}X)\) is replaced by its negative \(c_1(K_X)\).

The surface \(X\) is minimal and of general type, hence it is the minimal
model of \(M=X\#r\,\overline{\CP}^{2}\).  LeBrun's formula gives
\begin{equation}
\label{eq:bidisk-yamabe}
  \sigma(M)
  =-4\pi\sqrt{2c_1^2(X)}
  =-8\pi\sqrt{\chi(X)}.
\end{equation}
Combining \eqref{eq:bidisk-simplicial}, \eqref{eq:bidisk-yamabe}, and
Lemma~\ref{lem:yamabe-reformulation}, we obtain
\begin{equation}
\label{eq:bidisk-exact-data}
  \lVert M\rVert=6\chi(X),
  \qquad
  \Acal_4(M)=64\pi^2\chi(X),
  \qquad
  \Rcal_4(M)=\frac{3}{32\pi^2}.
\end{equation}
In particular,
\begin{equation}
\label{eq:bidisk-scalar-cost}
  \Acal_4(M)=\frac{32\pi^2}{3}\lVert M\rVert.
\end{equation}
For every metric \(g\) satisfying \(\Sc_g\geq-\lambda^2\),
\eqref{eq:A4-negative-part} now gives
\begin{align*}
  \lVert M\rVert
  &=\frac{3}{32\pi^2}\Acal_4(M)\\
  &\leq\frac{3}{32\pi^2}\int_M(\Sc_g^-)^2\,d\vol_g\\
  &\leq\frac{3}{32\pi^2}\lambda^4\vol_g(M).
\end{align*}
The definition of \(\Rcal_4(M)\) shows that this coefficient is optimal for
the fixed smooth manifold \(M\).

When \(r=0\), the product metric of sectional curvature \(-1\) on both
factors satisfies
\[
  \Sc=-4,
  \qquad
  \vol(X)=4\pi^2\chi(X),
\]
where the volume identity follows from the Chern--Gauss--Bonnet theorem.
Taking \(\lambda=2\), the right-hand side of
\eqref{eq:bidisk-bound} is \(6\chi(X)=\lVert X\rVert\), so equality
is attained.
\end{proof}

For \(r>0\), optimality refers to the infimum defining \(\Acal_4(M)\) and
does not imply that a minimizing metric exists.  The case \(r=0\) shows
that every coefficient valid uniformly for closed K\"ahler surfaces is at
least \(3/(32\pi^2)\), but it does not determine whether \(27/2\) is sharp.

\begin{remark}
The surface formula is compatible with finite unramified coverings.  Let
\(p:\widetilde S\to S\) be an unramified, equivalently \'{e}tale,
holomorphic covering of degree \(d\) between minimal closed K\"ahler
surfaces of general type.  Since
\(K_{\widetilde S}=p^*K_S\), one has
\(c_1^2(\widetilde S)=dc_1^2(S)\).  Together with multiplicativity of
simplicial volume, \eqref{eq:surface-reduction-data} gives
\begin{equation}
\label{eq:etale-density}
  \Acal_4(\widetilde S)=d\Acal_4(S),
  \qquad
  \lVert\widetilde S\rVert=d\lVert S\rVert,
  \qquad
  \Rcal_4(\widetilde S)=\Rcal_4(S).
\end{equation}
Hence the fixed-manifold coefficient is constant along finite unramified
towers.
\end{remark}
\begin{remark}
The same argument gives a bound for holomorphic surface bundles.  Let
\(\pi:E\to B\) be a holomorphic submersion from a closed K\"ahler surface
to a closed complex curve, and suppose that the base and fiber both have
genus at least two.  Then \(E\) is a minimal surface of general type
(see \cite{Kotschick_regularly_fibered_1999}*{p.~291}).  For every closed
complex surface, the Chern number identity is
\[
  c_1^2(E)=2\chi(E)+3\tau(E),
\]
where \(\tau(E)\) is the signature.  Kotschick's estimate for surface
bundles gives
\[
  3|\tau(E)|<\chi(E)
\]
(see \cite{Kotschick_regularly_fibered_1999}*{Theorem~3}).
Therefore \(c_1^2(E)<3\chi(E)\), and
\begin{equation}
\label{eq:holomorphic-bundle-upper}
  \lVert E\rVert
  \leq432\pi^2c_1^2(E)
  <1296\pi^2\chi(E).
\end{equation}
\end{remark}

\section{A symplectic extension and non-K\"ahler examples}
The proof of Theorem~\ref{thm:main} extends to a class of symplectic
\(4\)-manifolds that do not admit K\"ahler structures.  We first formulate
a general criterion and then construct an infinite family of examples.
% Proof of Theorem \ref{thm:main} can be extended slightly to establish Conjecture \ref{conj:gromov} for many symplectic 4-manifolds which do not admit K\"ahler structures.

As in complex geometry, symplectic \(4\)-manifolds carry a notion of
Kodaira dimension.  Let \((M,\omega)\) be a symplectic \(4\)-manifold.  The
space of \(\omega\)-compatible almost complex structures is nonempty and
contractible, so the class
\[
  K_\omega:=-c_1(TM,J)
\]
is independent of the choice of an \(\omega\)-compatible almost complex
structure \(J\).  It is called the \emph{symplectic canonical class}.  If
\(J\) is integrable, then \(K_\omega\) coincides with $ c_1(K_M)$, the canonical class of teh K\a"hler surface $ (M,\omega,J)$.
\begin{definition}[Symplectic Kodaira dimension]
  For a minimal symplectic 4-manifold $ (M,\omega ) $ with symplectic canonical class $ K_\omega  $, the symplectic Kodaira dimension of $ (M,\omega ) $ is defined as follows:\[
	\kappa^s(M,\omega )=\begin{cases}
	-\infty &\text{if }K_\omega\cdot [\omega]<0 \text{ or}\quad  K_\omega\cdot K_\omega <0\\
	0 &\text{if }  K_\omega\cdot [\omega]=0 \text{ and  } K_\omega\cdot K_\omega=0 \\
	1 &\text{if } K_\omega\cdot [\omega]>0  \text{ and  } K_\omega\cdot K_\omega=0 \\
	2 &\text{if } K_\omega\cdot [\omega]>0 \text{ and  } K_\omega\cdot K_\omega>0
	\end{cases}
	\]
  The Kodaira dimension of a non-minimal symplectic 4-manifold is defined to be that of any of its minimal models.
\end{definition}
Symplectic 4-manifolds with $ \kappa^s=-\infty$ must be rational or ruled (see \cite{Li-Kod}*{Theorem 2.4}). In particular, they must have $ b^+=1$ (see \cite{mcduff-rational}).
If a symplectic 4-manifold has $ \kappa^s\ge 0$, it must have a unique minimal model up to symplectomorphism (see \cite{Li-Kod}*{Proposition 2.1}).
% Symplectic Kodaira dimension actually only depends on the oriented diffeomorphism type of $ M$ (see \cite{Li-Kod}*{Theorem 2.6}).
We say $ (M,\omega)$ is a symplectic 4-manifold of general type if $ \kappa^s(M,\omega)=2$.

For symplectic 4-manifolds of general type, LeBrun's formula for Yamabe invariants is no longer available. Nevertheless, his curvature estimate in terms of Seiberg--Witten monopole classes still provides a lower bound for the scalar cost.

Suppose $ M$ is a smooth compact oriented 4-manifold with $ b^+\ge 2$. Let $ \mathfrak{C}\subset H^2(M;\mathbb{R})$ be the set of \emph{monopole classes}, i.e. the first Chern classes of those spin$^c $ structures on $ M$ for which the Seiberg-Witten equations have solutions
for all metrics. 
There is also a notion of Seiberg-Witten \emph{basic classes}, i.e. the first Chern classes of spin$ ^c$ structures for which the Seiberg-Witten invariant is nonzero. When $ b^+\ge 2$, every basic class is a monopole class.
Consider its convex hull $ \operatorname{Hull}(\mathfrak{C})$.
Define a real-valued invariant of $ M$ by setting \[
\beta^2(M):= \max \{ v^2\, |\, v\in \operatorname{Hull}(\mathfrak{C}) \}\]
when $ \mathfrak{C}$ is nonempty and $ \beta^2(M)=0$ if $ \mathfrak{C}$ is empty.
LeBrun proved in \cite{LeBrun_convex}*{Theorem A} that any metric $ g$ satisfies the curvature estimate \[\int_M \Sc_g^2 \, d\vol_g \ge 32\pi^2\beta^2(M).
\]
Although this estimate doesn't directly imply the same estimate for $ \Sc_g^-$, the same proof shows \begin{equation}\label{eq:LeBrun-Sc-ineq}
  \mathcal{A}_4(M)=\inf_g \int_M (\Sc^-_g)^2 \, d\vol_g \ge 32\pi^2\beta^2(M).
\end{equation}

The next proposition gives a criterion under which the conclusion of
Theorem~\ref{thm:main} extends to a symplectic \(4\)-manifold of general
type.
\begin{proposition}\label{prop:symplectic}
  Let $ (M,\omega)$ be a symplectic 4-manifold of general type with $ b^+(M)\ge 2$. Suppose there is a minimal K\"ahler surface $ S$ so that \begin{itemize}
    \item $ S$ has the same $ 2\chi +3\tau$ as the minimal model $ M_{min}$ of $ M$;
    \item $ \lVert S\rVert \ge \lVert M_{min} \rVert $.
  \end{itemize}
  Then every Riemannian metric \(g\) on \(M\) satisfies
\begin{equation}
  \lVert M\rVert
  \leq \frac{27}{2}
  \int_M (\Sc_g^-)^2\,d\vol_g.
\end{equation}
In particular, if \(\Sc_g\geq-\lambda^2\) for some \(\lambda\geq0\), then
\begin{equation}
  \lVert M\rVert
  \leq \frac{27}{2}\,\lambda^4\vol_g(M).
\end{equation}
  
  % Theorem \ref{thm:main} holds if its minimal model has the same $ 2\chi +3\tau$ as a minimal K\"ahler surface.
  % Let $ (M,\omega)$ be a symplectic 4-manifold with $ b^+\ge 2$ and symplectic Kodaira dimension $ \kappa^s(X)=2$. Suppose the minimal model $ M_{min}$ is oriented homotopy equivalent to a minimal K\"ahler surface of general type. Then every Riemannian metric $ g$ on $ M$ satisfies
\end{proposition}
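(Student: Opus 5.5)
The plan is to reduce the statement, via Lemma~\ref{lem:yamabe-reformulation}, to the metric-independent estimate
\[
  \lVert M\rVert\leq\frac{27}{2}\Acal_4(M),
\]
and then prove this by comparing both sides with the same quantity \(c_1^2(S)=(2\chi+3\tau)(S)\) for the auxiliary K\"ahler surface \(S\). Once the reduced estimate holds, the integrated inequality follows from \eqref{eq:A4-negative-part}. The pointwise version then follows from \(\Sc_g^-\leq\lambda^2\), exactly as in the proof of Theorem~\ref{thm:main}.

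\textbf{Simplicial volume side.} Since \(\kappa^s\geq0\), the minimal model \(M_{min}\) is unique, and \(M\) is obtained from it by symplectic blow-ups. Thus \(M\cong M_{min}\#r\,\overline{\CP}^2\), and connected sum with the simply connected \(\overline{\CP}^2\) leaves simplicial volume unchanged, so \(\lVert M\rVert=\lVert M_{min}\rVert\leq\lVert S\rVert\). Next I check that \(S\) is of general type. \(S\) is minimal, and \(c_1^2(S)=(2\chi+3\tau)(S)=(2\chi+3\tau)(M_{min})=K_{\omega}^2>0\), since \(M_{min}\) has \(\kappa^s=2\). A minimal K\"ahler surface with \(c_1^2>0\) is either of general type or rational/ruled. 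If \(S\) were rational or ruled, Lemma~\ref{lem:surface-reduction}(i) would give \(\lVert S\rVert=0\), hence \(\lVert M\rVert=0\), and the claim would be trivial. So we may assume \(S\) is of general type. Then Proposition~\ref{prop:nef-canonical} with \(m=2\), as in \eqref{eq:SV_boundby_K}, gives
\[
  \lVert M\rVert\leq\lVert S\rVert\leq432\pi^2c_1^2(S)=432\pi^2K_{\omega_{min}}^2 .
\]

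\textbf{Scalar cost side.} Here I use \eqref{eq:LeBrun-Sc-ineq}, so it suffices to show \(\beta^2(M)\geq K_{\omega_{min}}^2\). By Taubes, for \(b^+\geq2\) the class \(\pm K_\omega\) of a symplectic manifold is a Seiberg--Witten basic class, hence a monopole class. On the blow-up \(M=M_{min}\#r\overline{\CP}^2\), the blow-up formula shows that every class \(K_{\omega_{min}}\pm E_1\pm\cdots\pm E_r\) is again a basic class. Averaging over all sign choices puts the pulled-back class \(K_{\omega_{min}}\) in \(\operatorname{Hull}(\mathfrak C)\), so \(\beta^2(M)\geq K_{\omega_{min}}^2\). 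This is the same mechanism LeBrun uses for complex blow-ups. Hence
\[
  \Acal_4(M)\geq32\pi^2K_{\omega_{min}}^2 .
\]

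Dividing the two displays gives \(\lVert M\rVert\leq\frac{432}{32}\Acal_4(M)=\frac{27}{2}\Acal_4(M)\), which is the reduced estimate. I expect the main obstacle to be the scalar-cost step, for two reasons. First, one must justify that LeBrun's convex-hull argument applies to \(\Sc^-\) rather than \(\Sc\); the paper asserts this in \eqref{eq:LeBrun-Sc-ineq}. Second, one must correctly invoke the Seiberg--Witten blow-up formula so that the averaged class lies in the hull. A secondary point needing care is the classification step, which shows that \(S\) is of general type or else the statement is trivial.
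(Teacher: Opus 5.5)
Your proposal is correct and follows the paper's proof essentially step for step. Taubes plus the Seiberg--Witten blow-up formula put \(\pi^*K_{min}\) in \(\operatorname{Hull}(\mathfrak C)\). LeBrun's estimate \eqref{eq:LeBrun-Sc-ineq} then gives \(\Acal_4(M)\geq 32\pi^2K_{min}^2=32\pi^2c_1^2(S)\), and Proposition~\ref{prop:nef-canonical} bounds \(\lVert M\rVert=\lVert M_{min}\rVert\leq\lVert S\rVert\leq 432\pi^2c_1^2(S)\). The differences are cosmetic: the paper uses only the two classes \(\pi^*K_{min}\pm\sum_iE_i\) instead of averaging over all \(2^r\) signs, and it deduces that \(S\) is of general type directly from \(\lVert S\rVert\geq\lVert M\rVert>0\) via Lemma~\ref{lem:surface-reduction}, so your extra remark classifying minimal surfaces with \(c_1^2>0\) is unnecessary (though correct).
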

\begin{proof}
  The case where $ \lVert M \rVert=0 $ is trivially true. So in the following we may assume $ \lVert M \rVert>0$.
  Let $ (M_{min},\omega_{min})$ be a minimal model and $ \pi:M\to M_{min}$ be the blow-up map. Then $ M\cong M_{min}\# l\overline{\CP^2}$ for some $ l\ge 0$.  The symplectic canonical class is
  \[K_\omega = \pi^* K_{min} + \sum_{i=1}^l E_i,\]
  where $ K_{min}:=K_{\omega_{min}}$ and $ \{E_i\}$ are exceptional classes.
  Since $ b^+(M_{min})=b^+(M)\ge 2$, $ K_{min}$ is a basic class and therefore a monopole class of $ M_{min}$ (see \cite{Taubes-SW-symplectic}).
  By Seiberg-Witten blow-up formula (see \cite{fintushel-stern-immersed}*{Theorem 1.4}), we know that $ \pi^*K_{min}+\sum_{i=1}^l E_i $ and $ \pi^*K_{min}-\sum_{i=1}^l E_i$ are both monopole classes. Hence $ \pi^*K_{min}\in \operatorname{Hull}(\mathfrak{C})$. 
  
  By our hypothesis, there is a minimal K\"ahler surface $ S$ such that  \[K_{min}^2=2\chi(M_{min})+3\tau(M_{min})=2\chi(S)+3\tau(S)=K_S^2. \]
  Since $ \lVert S \rVert \ge \lVert M\rVert >0 $, we know $ S$ is of general type by Lemma \ref{lem:surface-reduction} and then $ K_S$ is nef and big.
  By \eqref{eq:LeBrun-Sc-ineq} and \eqref{eq:SV_boundby_K}, we have 
  \begin{align*}
  \dfrac{27}{2}\int_M (\Sc^-_g)^2\, d\vol_g &\ge 432\pi^2 (\pi^*K_{min})^2=432\pi^2 K_S^2\\
  &\ge  \lVert S\rVert \ge \lVert M_{min}\rVert =\lVert M \rVert.
  \end{align*}
  In particular, if $ \Sc_g\ge -\lambda^2$ for some $ \lambda\ge 0$, then \[
  \lVert M\rVert \le \dfrac{27}{2}\lambda^4 \vol_g(M).\]
\end{proof}

We now construct an infinite family of non-K\"ahler symplectic
\(4\)-manifolds to which Proposition~\ref{prop:symplectic} applies.
\begin{proposition}\label{prop:symplectic-exmaple}
  For each pair of integers $ g,h\ge 2$, there is an infinite family of minimal symplectic 4-manifolds $ \{X_i\}$ of general type with $ b^+\ge 2$ and a family of minimal K\"ahler surfaces $ \{S_i \}$, such that for all sufficiently large $ i$,
  \begin{itemize}
    \item $ X_i$ admits no K\"ahler structure,
    \item $ \pi_1(X_i)$ is nonamenable,
    \item $ 2\chi(X_i)+3\tau(X_i)=2\chi(S_i)+3\tau(S_i)$,
    \item $ 0<\lVert X_i\rVert \le \lVert S_i \rVert$.
  \end{itemize}
\end{proposition}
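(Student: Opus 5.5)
We construct the examples by Fintushel--Stern knot surgery on a product of curves, taking \(S_i:=\Sigma_g\times\Sigma_h\) for every \(i\). This is a minimal K\"ahler surface of general type with \(b^+\ge 2\) and \(\lVert S_i\rVert=24(g-1)(h-1)>0\) by Bucher--Karlsson. Let \(T=a\times b\subset\Sigma_g\times\Sigma_h\) be the product of two nonseparating simple closed curves. It is an embedded Lagrangian torus of self-intersection \(0\), and after perturbing \(\omega\) it becomes a symplectic torus. Choose an infinite sequence of fibered knots \(K_i\subset S^3\) whose Alexander polynomials \(\Delta_{K_i}\) are pairwise distinct and each have at least three nonzero terms; for example, \((2,2i+1)\)-torus knots. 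Put \(X_i:=S_{K_i}=(S\setminus\nu T)\cup_{T^3}(S^1\times(S^3\setminus\nu K_i))\). Fibredness of \(K_i\) makes \(X_i\) symplectic. Knot surgery does not change \(\chi\) or \(\tau\), which gives the identity \(2\chi(X_i)+3\tau(X_i)=2\chi(S_i)+3\tau(S_i)\) and \(b^+(X_i)=b^+(S)\ge2\).

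For the Seiberg--Witten side we use the Fintushel--Stern formula \(SW_{X_i}=SW_S\cdot\Delta_{K_i}(t^2)\), where \(t\) is dual to \([T]\). This shows that the basic classes of \(X_i\) are \(\pm K_S\) shifted by multiples of \(2[T]\). We then argue in three steps.
\begin{enumerate}[label=\textup{(\alph*)}]
\item \(X_i\) is minimal: no basic classes differ by the Poincar\'e dual of a sphere class of square \(-1\), so the blow-up formula forces minimality.
\item \(X_i\) is of general type: \(K_{X_i}^2=K_S^2=8(g-1)(h-1)>0\) and \(K\cdot[\omega]>0\).
\item \(X_i\) is not K\"ahler: a minimal K\"ahler surface of general type with \(b^+\ge2\) has exactly the two basic classes \(\pm K\). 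Since \(\Delta_{K_i}\) has at least three terms, \(X_i\) has more than two basic classes.
\end{enumerate}
The \(X_i\) are pairwise non-diffeomorphic because their SW polynomials differ. For nonamenability, note that \(\pi_1(X_i)\) surjects onto \(\pi_1(S)/\langle\!\langle\text{meridian of }T\rangle\!\rangle\). The meridian of \(T\) is already trivial in \(\pi_1(S\setminus\nu T)\) modulo the relations of \(S\), and the meridian of \(K_i\) is identified with it. Hence \(\pi_1(X_i)\to\pi_1(S)=\pi_1(\Sigma_g)\times\pi_1(\Sigma_h)\) is surjective, and a group surjecting onto a nonamenable group is nonamenable.

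For the simplicial volume inequality we use a degree-one map. Collapsing along a Seifert surface gives a degree-one map \(S^3\setminus\nu K_i\to S^1\times D^2\) that is the identity near the boundary torus. Crossing with \(S^1\) and gluing gives a degree-one map \(X_i\to S\), so \(\lVert X_i\rVert\le\lVert S_i\rVert\). For positivity we use additivity of simplicial volume under gluing along boundaries with amenable fundamental groups, in the form of Gromov and Bucher--Burger--Frigerio--Iozzi--Pagliantini--Pozzetti. We have \(\lVert S^1\times(S^3\setminus\nu K_i),\partial\rVert=0\) and \(\lVert T^2\times D^2,\partial\rVert=0\), since both carry an \(S^1\)-factor compatible with the boundary. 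Applying additivity to both gluings gives \(\lVert X_i\rVert=\lVert S\setminus\nu T,\partial\rVert=\lVert S\rVert>0\).

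The main obstacle is the hypothesis needed for equality in the gluing formula. Gromov's inequality is always in the direction \(\le\), so \(\lVert S\rVert\le\lVert S\setminus\nu T,\partial\rVert\) holds unconditionally. The reverse bound \(\lVert S\setminus\nu T,\partial\rVert\le\lVert X_i\rVert\) requires the equality version of the gluing theorem, which needs compatibility of the boundary identifications. To handle this, I would first try to show that \(T^3=\partial\nu T\) is \(\pi_1\)-injective in \(S\setminus\nu T\) and in \(S^1\times(S^3\setminus\nu K_i)\). For the latter, knot complements have incompressible boundary. For the former, \(a\times b\) is \(\pi_1\)-injective in \(S\), and the meridian has infinite order in the complement. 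If \(\pi_1\)-injectivity is awkward to verify, the fallback is a direct argument: from the degree-one map \(X_i\to S\) and the nonvanishing of a bounded cohomology class pulled back from \(\Sigma_g\times\Sigma_h\) (the product of area classes), obtain \(\lVert X_i\rVert\ge\lVert S\rVert\) by evaluating the pulled-back bounded volume class on \([X_i]\).
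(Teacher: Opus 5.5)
Your construction is genuinely different from the paper's: knot surgery on $\Sigma_g\times\Sigma_h$ instead of a symplectic sum with telescoping chains. But it has a genuine gap in the last bullet, $\lVert X_i\rVert\le\lVert S_i\rVert$.

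Your degree-one map $f\colon X_i\to S$ gives the opposite inequality. Since $f_*[X_i]=[S]$ and $f_*$ does not increase $\ell^1$-norms, you get $\lVert X_i\rVert\ge\lVert S\rVert=24(g-1)(h-1)$. That settles positivity, and no gluing is needed for it. But because $S_i=S$ is fixed, the bullet becomes the exact equality $\lVert X_i\rVert=\lVert\Sigma_g\times\Sigma_h\rVert$, and nothing in your argument proves the upper half.

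Put $W:=S\setminus\nu T$. Gromov's gluing inequality gives only $\lVert X_i\rVert\le\lVert W,\partial W\rVert$ and $\lVert S\rVert\le\lVert W,\partial W\rVert$. What you need is $\lVert W,\partial W\rVert\le\lVert S\rVert$, i.e.\ equality for the filling $S=W\cup(T^2\times D^2)$. That gluing is not compatible in the Bucher--Burger--Frigerio--Iozzi--Pagliantini--Pozzetti sense, because the kernels on the two sides of $T^3$ do not match. The meridian of $T$ dies in $T^2\times D^2$. In $W$, however, it bounds the punctured dual torus $(a'\times b')\setminus D$, where $a',b'$ are curves dual to $a,b$. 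It is the commutator of two elements lying outside the edge group of the amalgam
$\pi_1(W)=(F_1\times\pi_1\Sigma_h)*_{F_1\times F_2}(\pi_1\Sigma_g\times F_2)$, with $F_1=\pi_1(\Sigma_g\setminus\nu a)$ and $F_2=\pi_1(\Sigma_h\setminus\nu b)$. Hence it has infinite order.

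Your proposed repair makes things worse. $\pi_1$-injectivity of $T^3$ on both sides of the $X_i$-gluing gives $\lVert X_i\rVert=\lVert W,\partial W\rVert$. The bullet then becomes equivalent to the claim that filling $W$ by $T^2\times D^2$ does not lower simplicial volume. Compare $\lVert\Sigma_g\setminus D,\partial\rVert=4g-2>4g-4$, and hyperbolic Dehn filling: one expects strict inequality, in which case your $X_i$ fail the bullet outright. Your bounded-cohomology fallback again yields only a lower bound.

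What is missing is slack. Knot surgery preserves $\chi$ and $\tau$, so $2\chi+3\tau$ stays pinned at $c_1^2(S)$. Any comparison surface would then need the same $c_1^2$ together with a genuine upper bound on $\lVert W,\partial W\rVert$.

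The paper never computes a simplicial volume exactly. It forms $X_N=A\cup_\psi B_N$ using the telescoping chains $Q_N$. Since $B_N$ and $\partial B_N$ have amenable fundamental groups, $\lVert B_N,\partial B_N\rVert=0$, so $\lVert X_N\rVert\le\lVert A,\partial A\rVert$ uniformly in $N$. Meanwhile $2\chi+3\tau$ grows by $8N$. Matching with $\Sigma_g\times\Sigma_{h+i}$, whose simplicial volume grows linearly in $i$, gives the inequality for large $i$.

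Two smaller points:
\begin{enumerate}
\item Your Seiberg--Witten count excludes K\"ahler structures only for the given orientation. Since $b_1(X_i)=2g+2h$ is even, you would still need to rule out $\overline{X_i}$; the paper's odd $b_1$ argument works for either orientation.
\item In Fintushel--Stern surgery the meridian of $T$ is glued to the longitude of $K_i$, not to its meridian. The surjection $\pi_1(X_i)\to\pi_1(S)$ is best read off from $f$.
\end{enumerate}
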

\begin{proof}
  \smallskip
  \noindent\emph{Step 1: The first building block.}
  % \textbf{(1) First building block} $ E$:\\ 
  Fix integers $ g,h\ge 2$ and consider the surface bundle 
  \[ F\to E\to B, \]
  where $ F=\Sigma_g$, $ B=\Sigma_h$. Choose standard generators $ \alpha_1,\beta_1,\dots,\alpha_h,\beta_h$ for $ \pi_1(B)$. Choose a nonseparating curve $ \delta\subset F$ and define the monodromy $ \rho:\pi_1(B)\to \pi_0\operatorname{Diff}(F)$ by \[
  \rho(\alpha_1)=D_\delta, \quad \rho(\beta_1)=\dots =\rho(\alpha_h)=\rho(\beta_h)=1,\]
  where $ D_\delta$ denotes the positive Dehn twist along $ \delta$.
  The bundle admits a section of self-intersection $ 0$ as the monodromy fixes a disk away from $ \delta$. So the fiber class is nonzero and there is a symplectic structure $ \omega_E$ on $ E$ by Thurston's construction (see \cite{Thurston-symplectic}). The total space $ E$ is aspherical since both $ \Sigma_g$ and $ \Sigma_h$ are aspherical. Hence $ E$ is minimal.
  Its relevant topological invariants can be computed as follows \begin{itemize}
    \item $ \chi(E)=\chi(F)\chi(B)=4(g-1)(h-1)$.
    \item $ \tau(E)=0$ since the monodromy group is abelian and thus amenable (see \cite{Morita-vanishing}).
    \item Its first homology is \[H_1(E;\mathbb{Q})=H_1(\Sigma_g;\mathbb{Q})/\langle \beta -(D_\delta)_*\beta \,|\, \beta\in H_1(\Sigma_g;\mathbb{Q})\rangle \oplus \bigoplus_{i=1}^{2h} H_1(S^1;\mathbb{Q}),\]i.e. $ b_1(E)=2g+2h-1$.
    \item Its simplicial volume satisfies \[ \lVert E\rVert \ge \lVert F\rVert\cdot \lVert B\rVert =16(g-1)(h-1).\]
  \end{itemize}
  There are product Lagrangian tori \[
  T=x\times \alpha_2,\quad T^*=y\times \beta_2,\]where $ x,y\subset F$ are simple closed curves disjoint from $ \delta$ and intersecting each other transversely at exactly one point.
  $ T\cdot T^*=1 $ and thus $ [T]\neq 0$. So we can perturb $ T$ into a symplectic torus, which we still denote by $ T$.
  The meridian of $ T$ bounds $ T^*\setminus \nu T$ in $ A:=E\setminus \operatorname{int}\nu T$, so the meridian is null-homologous in $ A$ and $ b_1(A)=b_1(E)$.

  \smallskip
  \noindent\emph{Step 2: The second building block.}
  A triple $ (C,T_1,T_2)$ is called a telescoping triple if \begin{itemize}
    \item $ C$ is a symplectic 4-manifold, and $ T_1, T_2$ are disjoint embedded Lagrangian tori in $ C$ such that $ [T_1],[T_2]$ span a two dimensional subspace of $ H_2(C;\mathbb{R})$.
    \item $ \pi_1(C)\cong \mathbb{Z}^2$ and $ \pi_1(C\setminus(T_1\cup T_2))\to \pi_1(C)$ induced by inclusion is an isomorphism.
    \item The image of $ \pi_1(T_1)\to \pi_1(C)$ is a summand $ \mathbb{Z}\subset \pi_1(C)$.
    \item $ \pi_1(T_2)\to \pi_1(C)$ is an isomorphism.
  \end{itemize}
  A telescoping triple was constructed in \cite{ABBKP}*{Theorem 11} such that $ C$ is minimal, $ \chi(C)=8$ and $ \tau(C)=-4$.
  Perturb both tori to become symplectic and of the same area. Consider \[Q_N:=\underbrace{C\# \dots \# C}_{2N\text{ copies}},\]
  where each $ \#$ denotes a symplectic sum along $ T_2$ of one copy and $ T_1$ of the next copy. Since $ b^+(C)=3$, it is not rational or ruled and therefore $ Q_N$ is minimal (see \cite{Usher-minimality}*{Theorem 1.1}) and has $ \chi(Q_N)=16N$ and $ \tau(Q_N)=-8N$. Let $ T_1^1$ denote the $ T_1$ in the first copy of $ C$, $ U_N$ denote the $ T_2$ in the last copy, and $ B_N:=Q_N\setminus \operatorname{int} \nu U_N$. Then $ (C,T_1^1, U_N)$ is again a telescoping triple by \cite{ABBKP}*{Proposition 3}.
  We then have $ \pi_1(B_N)\cong \mathbb{Z}^2$ and $ \pi_1(\partial B_N)\cong \mathbb{Z}^3$. Since both fundamental groups are amenable, we have the relative bounded cohomology $ H_b^4(B_N,\partial B_N)=0$ and therefore $ \lVert B_N,\partial B_N\rVert=0$ (see \cite{relative-sv}).

  \smallskip
  \noindent\emph{Step 3: The symplectic sum construction.}
  Perturb the symplectic structure locally near $ U_N$ or $ T$ to make $ U_N\subset Q_N$ and $ T\subset E$ to have the same symplectic area. 
  Combining the two pieces $ E$ and $ Q_N$ via symplectic sum (see \cite{Gompf-sum}), we get the desired symplectic manifold \[
  (X_N,\omega_N)=E\#_{T=U_N} Q_N=A\cup_\psi B_N,\]
  where $ \psi:\partial B_N\to -\partial A$ is the orientation preserving and meridian preserving gluing map. Its relevant topological invariants are as follows. 
  \begin{itemize}
    \item $ \chi(X_N)=\chi(E)+\chi(Q_N)=4(g-1)(h-1)+16N$.
    \item $ \tau(X_N)=\tau(E)+\tau(Q_N)=-8N$.
    \item $ b_1(X_N)=b_1(A)+b_1(B_N)-2=2g+2h-1$ by Mayer-Vietoris. 
    \item $ b_2(X_N)=4gh+16N$ and $ b^+(X_N)=2gh+4N$.
  \end{itemize}
  In particular, \(X_N\) admits no K\"ahler structure because
\(b_1(X_N)\) is odd.
  $ X_N$ is minimal since both $ E$ and $ Q_N$ are minimal and both have $ b^+>1$.
  $ X_N$ cannot have $ \kappa^s(X_N)=-\infty$ since $ b^+(X_N)> 1$.
  \[
  K_{\omega_N}^2=2\chi(X_N)+3\tau(X_N)=8(g-1)(h-1)+8N>0\]
  implies that $ X_N$ is of general type.
  Since $ \pi_1(\partial A)$ is amenable, by Gromov's additivity theorem (see \cite{gromov_bounded_cohomology_1982}) we get \[ \lVert X_N \rVert \le \lVert A,\partial A \rVert + \lVert B_N,\partial B_N \rVert\le \lVert A,\partial A \rVert, \]
  where this upper bound is independent of $ N$.

  \smallskip
\noindent\emph{Step 4: Positivity of simplicial volume and comparison with
K\"ahler surfaces.}
  \begin{lemma}
    There is a degree $ 1$ map \[f_N:X_N\to E.\]
  \end{lemma}
  \begin{proof}
    Let $ V:=\nu T$ be the neighborhood of $ T$ in $ E$ such that $ E=A\cup V$, i.e. $\partial V=-\partial A$. Since $ X_N=A\cup_\psi B_N$, we may view the gluing map as $ \psi:\partial B_N\to \partial V$. Denote the inclusion maps $ i:\partial B_N\to B_N$ and $ j:\partial V\to V$. 
    Since $ (Q_N,T_1^1,U_N)$ is a telescoping triple, we must have $ \ker i_*$ is generated by the meridian.
    Note $ \ker (j_*\circ \psi_*)$ is also generated by the meridian as $ \psi_*$ preserves meridian. So $ j_*\circ \psi_*$ factors uniquely through $ i_*$ and there is a homomorphism $ \Phi:\pi_1(B_N)\to \pi_1(V)$ so that the following diagram commutes.
    % https://tikzcd.yichuanshen.de/#N4Igdg9gJgpgziAXAbVABwnAlgFyxMJZABgBpiBdUkANwEMAbAVxiRAB120sB9ARgAUnNHQBOeRgAIAQjwByAShABfUuky58hFH3JVajFm2G9BsxSrUgM2PASJk+++s1aIOXU0K5iJDSQBqSqrqtlpEuk7ULkbuJvwCQSr6MFAA5vBEoABmohAAtkhkIDgQSABM1Ax0AEYwDAAKGnbaIKJYaQAWOCDRhm4eaNg8AFSWOXmFiMWlSLoGrmy8YyEguQVz1LOIAMx9i3HsDZ1Y42uTFVtlu1W19U1h9u7tXT37sSAAVqPJykA
  \[\begin{tikzcd}
  \pi_1(\partial B_N) \arrow[d, "\psi_*"'] \arrow[r, "i_*"] & \pi_1(B_N) \arrow[d, "\Phi"] \\
  \pi_1(\partial V) \arrow[r, "j_*"']                       & \pi_1(V)                    
  \end{tikzcd}\]
  Since $ V\simeq T^2$ is a $ K(\mathbb{Z}^2,1)$-space, the homotopy classes of maps from any $ M$ to $ V$ are given by \[
  [M,V]=\operatorname{Hom}(\pi_1(M),\mathbb{Z}^2).\]
  In particular, there is a map $ \phi:B_N\to V$ such that $ \phi_*=\Phi$.
  We must also have $ \phi|_{\partial B_N}\simeq j\circ \psi$ since $ \Phi\circ i_*=j_*\circ \psi_*$.
  % \in \operatorname{Hom}(\pi_1(\partial B_N),\mathbb{Z}^2)$.
  Let $ H:\partial B_N\times [0,1]\to V$ be a homotopy such that $ H_0=\phi|_{\partial B_N}$ and $ H_1=j\circ \psi$. By the homotopy extension property of the pair $ (B_N,\partial B_N)$, we get a homotopy $ \tilde{H}:B_N\times [0,1]\to V$ such that $ \tilde{H}_0=\phi$ and $ \tilde{H}(x,t)=H(x,t)$ for $ x\in \partial B_N$. Set $ g:=\tilde{H}_1:B_N\to V$ we have $ g|_{\partial B_N}=j\circ \psi$. Therefore $ g$ glues with $ \operatorname{id}:A\to A$ to give a map $ f_N:X_N\to E$.

  Since $ f_N$ is identity on $ A$, its degree must be $ 1$.
  \end{proof}
  By monotonicity of simplicial volume, \cite{Bucher-fiberbundle}*{Corollary 1.3} and \cite{BucherKarlsson_h2xh2_2008}*{Corollary 3}, we get \[\lVert X_N\rVert\ge \lVert E\rVert \ge \lVert \Sigma_g \times \Sigma_h \rVert =24(g-1)(h-1)>0. \]
  In particular, $ \pi_1(X_N)$ is nonamenable.
  Let $ N_i=i(g-1)$ and $ X_i:=X_{N_i}$. Consider the K\"ahler surface \[S_i= \Sigma_g\times \Sigma_{h+i}, \]
  which has \begin{equation}
    2\chi(S_i)+3\tau(S_i)=8(g-1)(h+i-1)=2\chi(X_i)+3\tau(X_i).
  \end{equation} 
  For all $ i$ sufficiently large, we get \[
  \lVert S_i \rVert =24(g-1)(h+i-1)>\lVert A,\partial A \rVert \ge \lVert X_i \rVert. \]
\end{proof}

\begin{corollary}
\label{cor:nonkahler-gromov}
For each pair of integers \(g,h\geq2\), the manifolds \(X_i\) constructed
in Proposition~\ref{prop:symplectic-exmaple} satisfy, for all sufficiently
large \(i\),
\[
  \lVert X_i\rVert
  \leq\frac{27}{2}
  \int_{X_i}(\Sc_g^-)^2\,d\vol_g
\]
for every Riemannian metric \(g\) on \(X_i\).  In particular, they satisfy
Conjecture~\ref{conj:gromov} with the same constant as in
Theorem~\ref{thm:main}.
\end{corollary}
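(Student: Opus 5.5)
The corollary should follow by feeding the output of Proposition~\ref{prop:symplectic-exmaple} into Proposition~\ref{prop:symplectic}. Fix integers \(g,h\geq2\) and let \(\{X_i\}\) and \(\{S_i\}\) be the families produced by Proposition~\ref{prop:symplectic-exmaple}. Choose \(i\) large enough that all four listed properties hold. I would then check the hypotheses of Proposition~\ref{prop:symplectic} with \(M=X_i\).

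First, \(X_i\) is a symplectic \(4\)-manifold of general type, and \(b^+(X_i)=2gh+4N_i\geq2\). It is minimal, so its minimal model is \(M_{min}=X_i\) itself, with blow-up number \(l=0\). Second, take the minimal K\"ahler surface in Proposition~\ref{prop:symplectic} to be \(S=S_i=\Sigma_g\times\Sigma_{h+i}\). This surface is minimal because it is aspherical. The third bullet of Proposition~\ref{prop:symplectic-exmaple} gives equality of the quantities \(2\chi+3\tau\) for \(S_i\) and \(M_{min}=X_i\). The fourth bullet gives \(\lVert S_i\rVert\geq\lVert X_i\rVert=\lVert M_{min}\rVert\).

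Proposition~\ref{prop:symplectic} then yields
\[
  \lVert X_i\rVert\leq\frac{27}{2}\int_{X_i}(\Sc_g^-)^2\,d\vol_g
\]
for every Riemannian metric \(g\) on \(X_i\). Here the letter \(g\) denotes the metric, not the genus. If \(\Sc_g\geq-\lambda^2\), then \(\Sc_g^-\leq\lambda^2\), and we obtain \(\lVert X_i\rVert\leq\frac{27}{2}\lambda^4\vol_g(X_i)\). This is Conjecture~\ref{conj:gromov} with \(c_4=27/2\).

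The argument is mostly bookkeeping. The one step I would double-check is the reliance of Proposition~\ref{prop:symplectic} on LeBrun's estimate \eqref{eq:LeBrun-Sc-ineq} with \(\pi^*K_{min}=K_{\omega}\) in the convex hull of monopole classes. This needs \(b^+\geq2\) and Taubes' theorem, and both are available here. I would also make sure that the phrase ``sufficiently large \(i\)'' in the corollary is the same threshold as in Proposition~\ref{prop:symplectic-exmaple}, namely the one ensuring \(\lVert S_i\rVert>\lVert A,\partial A\rVert\).
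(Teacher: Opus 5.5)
Your proposal is correct and matches the argument the paper intends; the corollary is stated without a separate proof. You apply Proposition~\ref{prop:symplectic} to the minimal manifold \(M=X_i=M_{min}\) with \(S=S_i=\Sigma_g\times\Sigma_{h+i}\), and the last two bullets of Proposition~\ref{prop:symplectic-exmaple} supply both hypotheses, after which the pointwise form follows from \(\Sc_g^-\leq\lambda^2\).
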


\section{Canonical volume and scalar cost in higher dimensions}
\label{sec:higher-limit}

Proposition~\ref{prop:nef-canonical} holds in every complex dimension.  A
scalar-curvature consequence would also require canonical volume to be
controlled by scalar cost.  For surfaces of general type, LeBrun's formula
gives the exact identity
\[
  \Acal_4(M)=32\pi^2c_1^2(S),
\]
where \(S\) is the minimal model.  In complex dimension at least three, no
such positive lower bound holds, even for ample canonical bundles.

\begin{proposition}
\label{prop:higher-yamabe-failure}
For every \(m\geq3\), there exists a smooth projective \(m\)-fold \(X\)
with ample canonical bundle such that
\begin{equation}
\label{eq:higher-counterexample-data}
  \int_X c_1(K_X)^m=m+3>0,
  \qquad
  \sigma(X)>0,
  \qquad
  \Acal_{2m}(X)=\lVert X\rVert=0.
\end{equation}
In particular, there is no constant \(b_m>0\) such that
\begin{equation}
\label{eq:false-higher-bridge}
  \Acal_{2m}(X)
  \geq b_m\int_X c_1(K_X)^m
\end{equation}
for every closed K\"ahler \(m\)-fold with nef and big canonical bundle.
\end{proposition}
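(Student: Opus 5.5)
We take for \(X\) a smooth hypersurface \(X\subset\CP^{m+1}\) of degree \(d=m+3\), for instance the Fermat hypersurface \(\sum_j z_j^{m+3}=0\). We then check the three items of \eqref{eq:higher-counterexample-data} one at a time. The canonical-volume computation is cohomological, and the vanishing of \(\lVert X\rVert\) and \(\Acal_{2m}(X)\) is topological.

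\emph{Step 1: canonical class.} By adjunction, \(K_X=(K_{\CP^{m+1}}\otimes\mathcal O(d))|_X=\mathcal O_X(d-m-2)=\mathcal O_X(1)\). This bundle is ample, so \(K_X\) is nef and big. Writing \(h\) for the hyperplane class, we get \(\int_X c_1(K_X)^m=\int_X h^m=\deg X=m+3\).

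\emph{Step 2: topology.} Since \(m\geq3\), i.e. \(\dim_{\mathbb C}X\geq 2\), the Lefschetz hyperplane theorem gives \(\pi_1(X)=1\) and \(H^2(X;\mathbb Z)\cong\mathbb Z h\). Because \(X\) is simply connected, its bounded cohomology vanishes in positive degrees, so \(\lVert X\rVert=0\) (Gromov). For the spin question, \(w_2(X)\) is the mod \(2\) reduction of \(c_1(X)=(m+2-d)h=-h\). Since \(H^2(X;\mathbb Z)=\mathbb Z h\) has no torsion and \(H_1(X)=0\), the reduction of \(h\) is nonzero in \(H^2(X;\mathbb Z/2)\cong\mathbb Z/2\). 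Hence \(w_2(X)\neq0\), and \(X\) is not spin. The same holds for its universal cover, which is \(X\) itself.

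\emph{Step 3: positive scalar curvature.} \(X\) is a closed simply connected non-spin manifold of real dimension \(2m\geq6\geq5\). By the Gromov--Lawson theorem, such manifolds admit metrics of positive scalar curvature. Such a metric has positive Yamabe constant in its conformal class, so \(\sigma(X)>0\). Lemma~\ref{lem:yamabe-reformulation} then gives \(\Acal_{2m}(X)=|\min\{\sigma(X),0\}|^{m}=0\).

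\emph{Conclusion.} If a constant \(b_m>0\) satisfied \eqref{eq:false-higher-bridge}, then applying it to this \(X\) would give \(0=\Acal_{2m}(X)\geq b_m(m+3)>0\), a contradiction.

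The main obstacle is Step 3. It depends on checking carefully that \(w_2\neq0\), where the torsion-freeness of \(H^2\) and the vanishing of \(H_1\) from Lefschetz matter, and on invoking the Gromov--Lawson surgery theorem correctly. If the spin check were to fail for some parity of \(m\), the fallback is Stolz's theorem: one would verify that the \(\alpha\)-invariant vanishes, using that \(\hat A(X)=0\) for a hypersurface with \(K_X\) ample in the relevant degrees. Here, however, \(d-m-2=1\) is odd, so \(X\) is non-spin for every \(m\) and the fallback should be unnecessary.
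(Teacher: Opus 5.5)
Your proposal is correct and follows the paper's own proof: the degree-\((m+3)\) hypersurface in \(\CP^{m+1}\), adjunction giving \(K_X=\mathcal O_X(1)\), Lefschetz for \(\pi_1(X)=1\) and \(H^2(X;\mathbb Z)=\mathbb Z h\), non-spinness from \(w_2\equiv h\pmod 2\), Gromov--Lawson for positive scalar curvature (hence \(\sigma(X)>0\) and \(\Acal_{2m}(X)=0\)), and Gromov's vanishing theorem for \(\lVert X\rVert\). Two harmless slips: the identification \(H^2(X;\mathbb Z)\cong\mathbb Z h\) is where \(m\geq3\) is actually needed, whereas simple connectivity only needs \(\dim_{\mathbb C}X\geq2\), so your ``i.e.'' conflates the two conditions; and the Stolz fallback would not work as described, since spin hypersurfaces with ample canonical bundle can have \(\hat A\neq0\). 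As you note, the fallback is never needed, because \(d-m-2=1\) is odd.
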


\begin{proof}
Following LeBrun, let
\[
  X=X_{m+3}\subset\CP^{m+1}
\]
be a smooth hypersurface of degree \(d=m+3\)
(see \cite{LeBrun_kodaira_yamabe_1999}*{Introduction, p.~134}).
Let \(H=c_1(\mathcal{O}_{\CP^{m+1}}(1)|_X)\) be the restricted hyperplane
class.  The adjunction formula gives
\[
  K_X\cong\mathcal{O}_X(d-m-2)=\mathcal{O}_X(1).
\]
Hence \(K_X\) is ample, \(c_1(K_X)=H\), and
\[
  \int_X c_1(K_X)^m=\int_X H^m=d=m+3.
\]

The Lefschetz hyperplane theorem gives
\(\pi_1(X)=0\) and
\(H^2(X;\mathbb Z)=\mathbb Z\langle H\rangle\).  Moreover
\[
  w_2(X)\equiv c_1(T^{1,0}X)
  \equiv(m+2-d)H
  \equiv H\pmod2.
\]
Since \(H\) is a generator, its reduction modulo \(2\) is nonzero, and
therefore \(X\) is not spin.  Its real dimension is \(2m\geq6\), so the
Gromov--Lawson classification of simply connected non-spin manifolds
provides a metric of positive scalar curvature
(see \cite{GL_simply_psc}*{Corollary~C, p.~424}).  Hence
\(\sigma(X)>0\), and Lemma~\ref{lem:yamabe-reformulation} gives
\(\Acal_{2m}(X)=0\).

Finally, \(\pi_1(X)\) is trivial and therefore amenable.  Gromov's mapping
theorem gives \(\lVert X\rVert=0\)
(see \cite{gromov_bounded_cohomology_1982}*{Section~3.1,
Corollary~\textup{(C)}}).  This proves
\eqref{eq:higher-counterexample-data}.
\end{proof}

Here both scalar cost and simplicial volume vanish, so Gromov's conjecture
is unaffected.  Positive canonical volume alone cannot give a positive
lower bound for scalar cost.

Suppose that \(K_X\) is ample.  Apply
Lemma~\ref{lem:twisted-negative} with \(\theta=0\) and
\[
  [\omega_h]=\frac{2\pi}{2m-1}c_1(K_X)
\]
to obtain a K\"ahler--Einstein metric \(h\) satisfying
\(\Ric_h=-(2m-1)h\).  Its scalar curvature and volume are
\[
  \Sc_h=-2m(2m-1),
  \qquad
  \vol_h(X)
  =\frac1{m!}
   \left(\frac{2\pi}{2m-1}\right)^m
   \int_X c_1(K_X)^m.
\]
Since \(h\) has negative constant scalar curvature, it minimizes the Yamabe
functional in its conformal class.  Hence
\begin{equation}
\label{eq:ke-yamabe}
  Y(X,[h])
  =-\frac{4\pi m}{(m!)^{1/m}}
  \left(\int_X c_1(K_X)^m\right)^{1/m}.
\end{equation}
The smooth Yamabe invariant is the supremum over all conformal classes, so
\eqref{eq:ke-yamabe} gives only the lower bound
\[
  \sigma(X)\geq Y(X,[h]).
\]
On the other hand, Lemma~\ref{lem:yamabe-reformulation} shows that an
estimate
\[
  \Acal_{2m}(X)\geq b_m\int_X c_1(K_X)^m
\]
would require the upper bound
\[
  \sigma(X)
  \leq-b_m^{1/m}
  \left(\int_X c_1(K_X)^m\right)^{1/m}<0.
\]
LeBrun's Seiberg--Witten argument gives this reverse inequality in complex
dimension two, whereas Proposition~\ref{prop:higher-yamabe-failure} rules
it out for \(m\geq3\).  Thus only in complex dimension two does canonical
volume give the scalar-cost estimate used above.

\section{Questions on sharp constants}
\label{sec:directions}

Define the optimal uniform coefficient for closed K\"ahler surfaces by
\begin{equation}
\label{eq:optimal-kahler-constant}
  c_4^{\mathrm{Kah}}
  :=\sup\left\{
  \frac{\lVert M\rVert}{\Acal_4(M)}:
  M\text{ is a closed K\"ahler surface},\
  \Acal_4(M)>0
  \right\}.
\end{equation}
The results above give
\begin{equation}
\label{eq:kahler-constant-interval}
  \frac{3}{32\pi^2}
  \leq c_4^{\mathrm{Kah}}
  \leq\frac{27}{2}.
\end{equation}
The upper bound comes from the general Ricci-curvature estimate in
Proposition~\ref{prop:nef-canonical}; the bidisk family attains the lower bound.

\begin{problem}
Determine \(c_4^{\mathrm{Kah}}\).  In particular, decide
whether the bidisk coefficient \(3/(32\pi^2)\) remains valid for every
closed K\"ahler surface of general type.
\end{problem}

A second question concerns the sharpness of the nef-canonical estimate.
Its constant comes from Gromov's general Ricci-curvature inequality, and the
K\"ahler condition may permit a smaller universal constant.  In complex
dimension one the optimal value is
\(C_1^{\mathrm{can}}=2\), because a curve of genus \(g\geq2\) satisfies
\(\lVert X\rVert=4g-4=2\deg K_X\).

\begin{problem}
For each \(m\geq2\), find the optimal constant \(C_m^{\mathrm{can}}\) such
that
\[
  \lVert X\rVert
  \leq C_m^{\mathrm{can}}
  \int_X c_1(K_X)^m
\]
for every closed K\"ahler \(m\)-fold with nef canonical bundle.
\end{problem}

Proposition~\ref{prop:higher-yamabe-failure} rules out only estimates that
factor through canonical volume; higher-dimensional bounds involving other
smooth or index-theoretic invariants remain possible.

Finally, consider the simplicial volumes realized in the K\"ahler category.
Set
\[
  \operatorname{SV}_{\mathrm{Kah}}(4)
  :=\bigl\{\lVert X\rVert:
  X\text{ is a closed connected K\"ahler surface}\bigr\}.
\]
Every nonnegative rational number is the simplicial volume of a closed
oriented connected four-manifold
(see \cite{HeuerLoeh_spectrum_2021}*{Theorem~B}), but the construction of
Heuer and L\"oh does not yield K\"ahler surfaces.

\begin{problem}
Determine \(\operatorname{SV}_{\mathrm{Kah}}(4)\).  In particular, does it
contain every nonnegative rational number?  More generally, given a closed
oriented connected manifold \(M\), does there exist a closed connected
K\"ahler surface \(X\) such that
\[
  \lVert X\rVert=\lVert M\rVert?
\]
\end{problem}

\renewcommand{\biblistfont}{\normalfont\fontsize{10}{11}\selectfont}
\bibliographystyle{amsalpha}
\bibliography{references}

\end{document}